\numberwithin{equation}{section}
\renewcommand{\subsection}{\@startsection
{subsection}{2}{0mm}{\baselineskip}{-0.25cm}
{\normalfont\normalsize\em}}
\newtheorem{theorem}{Theorem}[section]
\newtheorem{proposition}[theorem]{Proposition}
\newtheorem{corollary}[theorem]{Corollary}
\newtheorem{lemma}[theorem]{Lemma}
\theoremstyle{definition}
\newtheorem{definition}[theorem]{Definition}
\newtheorem{example}[theorem]{Example}
\newtheorem{Ex2.1}[theorem]{Example 2.5 revisited}
\newtheorem{Ex2.2}[theorem]{Example 2.6 revisited}
\theoremstyle{remark}
\newtheorem{remark}[theorem]{Remark}
\newtheorem{scholium}[theorem]{Scholium}
\def\1{\mathbf 1}
\def\e{\mathbf e}
\def\x{\mathbf x}
\def\F{\mathbf F}
\def\T{\mathbf T}
\def\R{\mathbf R}
\def\bN{\mathbb N}
\def\bZ{\mathbb Z}
\def\cO{\mathcal O}
\def\cL{\mathcal L}
\def\cO{\mathcal O}
\def\cS{\mathcal S}
\def\N {\mathbb{N}}
\def\e {\epsilon}
\def\f {\varphi}
\def\g {\gamma}
\def\o {\omega}
\def\L {\left}
\def\R {\right}
\title[Numerical semigroups by genus and even gaps]{Counting 
numerical semigroups \\ by genus and even gaps}
\author[M. Bernardini]{Matheus Bernardini}
\address{IMECC/UNICAMP, R. S\'ergio Buarque de Holanda 651, Cidade 
Universit\'aria \lq\lq Zeferino Vaz", 13083-859, Campinas, 
SP-Brazil}
\address{Instituto Federal de S\~ao Paulo, Campinas, SP-Brazil}
\email{matheus.bernardini@gmail.com}
\author[F. Torres]{Fernando Torres}
\address{IMECC/UNICAMP, R. S\'ergio Buarque de Holanda 651, Cidade 
Universit\'aria \lq\lq Zeferino Vaz", 13083-859, Campinas, 
SP-Brazil}
\email{ftorres@ime.unicamp.br}
\thanks{{\em 2010 Math. Subj. Class.}: Primary 20M14; 
Secondary 05A15, 05A19}
\thanks{{\em Keywords}: numerical semigroup, even gap, genus, 
$\gamma$-hyperelliptic semigroup, $f_\gamma$ sequence}
\begin{document}


   \begin{abstract} Let $n_g$ be the number of numerical semigroups of genus $g$. We present 
   an approach to compute $n_g$ by using even gaps, and the question: Is it true that 
   $n_{g+1}>n_g$? is investigated. Let $N_\gamma(g)$ be the number of numerical semigroups 
   of genus $g$ whose number of even gaps equals $\gamma$. We show that 
   $N_\gamma(g)=N_\gamma(3\gamma)$ for $\gamma\leq \lfloor g/3\rfloor$ and $N_\gamma(g)=0$ 
   for $\gamma>\lfloor 2g/3\rfloor$; thus the question above is true provided that 
   $N_\gamma(g+1)>N_\gamma(g)$ for $\gamma=\lfloor g/3\rfloor +1,\ldots,\lfloor 
   2g/3\rfloor$. We also show that $N_\gamma(3\gamma)$ coincides with $f_\gamma$, the number 
   introduced by Bras-Amor\'os \cite{Amoros3} in conection with semigroup-closed sets. 
   Finally, the stronger possibility $f_\gamma\sim \varphi^{2\gamma}$ arises being 
   $\varphi=(1+\sqrt{5})/2$ the golden number. \end{abstract}
    \maketitle

   \section{Introduction}\label{s1}

A {\em numerical semigroup} $S$ is a submonoid of the set of 
nonnegative integers $\bN_0$, equipped with the usual addition, such 
that $G(S):=\bN_0\setminus S$, the set of {\em gaps} of $S$, is 
finite. The number of elements $g=g(S)$ of $G(S)$ is called the {\em 
genus} of $S$ and thus the semigroup property implies (see e.g. 
\cite[Lemma 2.14]{GS-R})
   \begin{equation}\label{eq1.1}
   S\supseteq \{2g+i:i\in\bN_0\}\, .
   \end{equation}
Suitable references for the background on numerical semigroups that we assume are in fact 
the books \cite{GS-R} and \cite{RA}. In spite of its simplicity, as a mathematical object, a 
numerical semigroup often plays a key role in the study of more involved or subtle 
structures arising e.g. in Algebraic Curve Theory \cite{Kato}, \cite{Arnaldo}, 
\cite{Torres1}, \cite{Oliveira-Pimentel}, \cite{Komeda} or e.g. in Coding Theory 
\cite{Pellikaan-Torres}, \cite{Amoros4}.

In this paper we deal with a problem of purely combinatorial nature, namely: For $g\in 
\bN_0$ given, find the number $n_g$ of elements of the family $\cS_g$ of numerical 
semigroups of genus $g$; Kaplan \cite{Kaplan2} wrote a nice survey and state of the art on 
this problem, and one can find information on these numbers in Sloane's On-line Encyclopedia 
of Integer Sequences \cite{Sloane}. Indeed, our goal here is the question (\ref{eq1.2}) 
below.

We have $n_g\leq \binom{2g-1}{g}$ by (\ref{eq1.1}) and in fact, a better bound is known, 
namely $n_g\leq\frac{1}{g+1}\binom{2g}{g}$ which was obtained by Bras-Amor\'os and de Mier 
via so-called Dyck paths \cite{BdM}. Further bounds on $n_g$ were computed by Bras-Amor\'os 
\cite{Amoros2} via the semigroup tree method; see also Bras-Amor\'os and Bulygin 
\cite{Amoros-Bulygin}, O'Dorney \cite{Dorney}, Elizalde \cite{Elizalde}. Blanco and Rosales 
\cite{BR} approached this problem by considering a partition of $\cS_g$ by subsets of 
semigroups $S$ of a given Frobenius number $F=F(S)$, which by definition is the biggest 
integer which does not belong to $S$; see also \cite{BG-SP}. In any case, computing the 
exact value of $n_g$ seems to be out of reach although there exist algorithmic methods for 
determining such a number \cite{FH}, \cite{BA-F}.

By taking into consideration the first $50$ values of $n_g$, 
Bras-Amor\'os \cite{Amoros1} conjectured Fibonacci-like properties 
on the behaviour of the sequence $n_g$:
   \begin{enumerate}
\item[\rm(A)] $n_{g+2}\geq n_{g+1}+n_g$ for any $g$;
\item[\rm(B)] $\lim_{g\to\infty}\frac{n_{g+1}+n_g}{n_{g+2}}=1$;
\item[\rm(C)] $\lim_{g\to\infty}\frac{n_{g+1}}{n_g}=\varphi:= 
\frac{1+\sqrt{5}}{2}$, so-called {\em golden number}.
   \end{enumerate}
Indeed, Conjectures (B) and (C) have been recently proved by Zhai 
\cite{Zhai}. Here we focus in the following problem suggested by (A) 
whose answer is positive for large $g$ by (C) or $g\leq 50$ by the 
aforementioned values in \cite{Amoros1} (which were recently extended to $g\leq 67$ in \cite{FH}):
   \begin{equation}\label{eq1.2}
   \text{Is it true that $n_{g+1}>n_g$ for any $g\geq 1$?}
   \end{equation}
   The {\em multiplicity} $m(S)$ of a numerical semigroup $S$ is its first 
   positive element. Kaplan \cite{Kaplan} gave an
   approach to Conjecture A and Question (\ref{eq1.2}) by counting
   numerical semigroups by genus and multiplicity. He obtained some partial interesting 
results, but his method does not solve the problems.

In addition, Bras-Amor\'os \cite{Amoros3} introduced the notion of 
{\em ordinarization transform} $\T:\cS_g\to\cS_g$ given by 
$\T(S)=(S\cup\{F(S)\})\setminus\{m(S)\}$, with $S\neq 
S_g:=\{0\}\cup\{g+i: i\in \bN\}$ (so-called {\em ordinary semigroup} of 
genus $g$). Then the minimum nonnegative integer $r$ such that 
$\T^r(S)=S_g$ is the {\em ordinarization number} of $S$; it turns 
out that $r\leq g/2$, and so she counted numerical semigroups by 
genus and ordinarization number. Unfortunately this method also does 
not give an answer to either computing $n_g$ or question 
(\ref{eq1.2}).

In this paper we approach (\ref{eq1.2}) by counting numerical semigroups by genus and number 
of even gaps. Our method is motivated by the interplay between double covering of curves and 
Weierstrass semigroups at totally ramified points of such coverings; see for instance Kato 
\cite{Kato}, Garcia \cite{Arnaldo}, Torres \cite{Torres1}, Oliveira and Pimentel 
\cite{Oliveira-Pimentel}, Komeda \cite{Komeda}.

Let $N_\gamma(g)$ denote the number of elements of the family $\cS_\gamma(g)$, 
so-called {\em $\gamma$-hyperelliptic semigroups of genus $g$}; i.e. 
those in $\cS_g$ whose number of even gaps equals $\gamma$. From Corollary \ref{cor2.1}
   \begin{equation}\label{eq1.21}
   n_g=\sum_{\gamma=0}^{\lfloor 2g/3\rfloor}N_\gamma(g)\, ;
   \end{equation}
  in particular, see Remark \ref{rem3.0}, Question (\ref{eq1.2})
   holds true provided that
    \begin{equation}\label{eq1.3}
    N_\gamma(g+1)>N_\gamma(g)\quad\text{for
    $\gamma=\lfloor g/3\rfloor+1, \ldots, \lfloor 2g/3\rfloor $}\, .
    \end{equation}
In Section \ref{s2} we deal with the set of even gaps of a numerical 
semigroup, where the key result is Lemma \ref{lemma2.1} (cf. 
\cite{Torres2}). In particular, (\ref{eq1.21}) is a direct consequence of 
the stratification in (\ref{eq2.2}). For $2g\geq 3\gamma$ (cf. \cite{Strazzanti}) we point 
out a quite useful parametrization, namely $\cS_\gamma(g)\to 
\cS_\gamma$, $S\mapsto S/2$, which was introduced by Rosales et al. 
\cite{Rosales} (see (\ref{eq2.3}), \cite{Aur}, \cite{Gu-Tang}). Thus Remark \ref{rem2.2} shows 
the class of numerical semigroups we deal with in this paper; we do 
observe that these semigroups were already studied for example in 
\cite{GFJ} by using the concept of {\em weight} of semigroups.

We have $N_\gamma(g)\leq N_\gamma(3\gamma)$ and $N_\gamma(g)=N_\gamma(3\gamma)$ if and only 
if $g\geq 3\gamma$; see Corollary \ref{cor3.1}. The key ingredient here is the 
$t$-translation of a numerical semigroup introduced in Definition \ref{def3.1}.

By the above considerations on $N_\gamma(g)$, it is natural to investigate the asymptotic 
behaviour of the sequence $N_\gamma(3\gamma)$ which is studied in Section \ref{s4}; indeed, 
to our surprise, it coincides with the sequence $f_\gamma$, introduced by Bras-Amor\'os in 
\cite[p. 2515]{Amoros3}, which has to do with semigroup-closed sets (see Theorem 
\ref{thm4.1} here).

Finally in Section \ref{s5} we compute certain limits involving 
$f_\gamma$ (see Proposition \ref{prop5.1}) which are of theoretical 
interest as they are related to the stronger possibility: 
$f_\gamma\sim \varphi^{2\gamma}$.

   \section{On the even gaps of a numerical semigroup}\label{s2}

   Throughout, let $S$ be a numerical semigroup of genus
   $g=g(S)$, $G_2=G_2(S)$ the set of its even gaps, and
   $\gamma=\gamma(S)$ the number of elements of $G_2$. As a matter of
   terminology, we say that $S$ is {\em
   $\gamma$-hyperelliptic}. In particular, from (\ref{eq1.1}),
   there are exactly $g-\gamma$ (resp. $\gamma$) even (resp.
   odd) nongaps in $S\cap [1,2g]$. For $\gamma\geq 1$, these
   odd nongaps will be denoted by
      \begin{equation}\label{eq2.1}
   o_\gamma=o_\gamma(S)<\ldots<o_1=o_1(S)\, .
   \end{equation}
   \begin{remark}\label{rem2.0} With notation as above, we notice that 
   $o_i\leq 2g-2i+1$ for $i=1,\ldots,\gamma$.
   \end{remark}
   As usual, for pairwise different natural numbers $a_1,\ldots, a_\alpha$, we set $
\langle a_1,\ldots,a_\alpha\rangle:=
\{a_1x_1+\ldots+a_\alpha x_\alpha: x_1, \ldots, x_\alpha\in 
\bN_0\}$. It is well-known, so far, that this set is a numerical 
semigroup if and only if $\gcd(a_1,\ldots,a_\alpha)=1$.
      \begin{remark}\label{rem2.1} We have $\gamma(S)=0$ if
    and only $S=\langle 2,2g+1\rangle $; in the literature,
    this semigroup is classically called {\em hyperelliptic}.
    In general $g\geq \gamma$, and equality holds if and only if 
$g=\gamma=0$.
    \end{remark}
    From now on, we always assume $\gamma\geq 1$ so that
    $1,2\in G(S)$, the set of gaps of $S$, and $g\geq\gamma+1$.

The following result and their corollaries were already noticed in 
\cite{Torres2}. It is analogous to (\ref{eq1.1}), and for the sake 
of completeness we state proofs.
   \begin{lemma}\label{lemma2.1} The biggest even gap $\ell$
   of a $\gamma$-hyperelliptic semigroup $S$ of genus $g$ satisfies
   $$
  \ell\leq \min(4\gamma-2,4g-4\gamma)\, .
    $$
    \end{lemma}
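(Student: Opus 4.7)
The plan is to prove both inequalities by a single parity-respecting involution argument. Write $\ell = 2m$ and consider the involution $k \mapsto 2m - k$ on $\{1, 2, \ldots, 2m-1\}$. The crucial observation is: whenever $k$ is a nongap with $0 < k < 2m$, its partner $2m - k$ must be a gap, for otherwise the sum $k + (2m-k) = 2m$ would be a nongap, contradicting that $2m = \ell$ is itself a gap. Since $k \mapsto 2m - k$ preserves parity, it restricts separately to the even integers and to the odd integers in $[1, 2m-1]$.

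For the bound $\ell \leq 4\gamma - 2$, I would apply this to the $m-1$ even integers $\{2, 4, \ldots, 2m-2\}$. Writing $a$ for the number of even nongaps and $b$ for the number of even gaps in this range, the observation gives an injection from even nongaps into even gaps, so $a \leq b$; together with $a + b = m - 1$ this yields $b \geq (m-1)/2$. Since $2m$ is the largest even gap of $S$, every even gap lies in $\{2, 4, \ldots, 2m\}$, and hence $\gamma = b + 1$. Therefore $\gamma - 1 \geq (m-1)/2$, which rearranges to $m \leq 2\gamma - 1$, giving $\ell \leq 4\gamma - 2$.

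For the bound $\ell \leq 4g - 4\gamma$, the same reasoning is applied to the $m$ odd integers $\{1, 3, \ldots, 2m-1\}$. The one mildly delicate point is the fixed point $k = m$ of the involution, which occurs when $m$ itself is odd; but a fixed point that is also a nongap would make $2m = m + m$ a nongap, so $m$ must in fact be a gap in that case. Letting $c$ and $d$ denote the numbers of odd nongaps and odd gaps in $[1, 2m-1]$, the same injection argument gives $c \leq d$ with $c + d = m$, hence $d \geq m/2$. The discussion preceding (\ref{eq2.1}) shows that the total number of odd gaps of $S$ is $g - \gamma$, all lying in $[1, 2g-1]$ by (\ref{eq1.1}), so in particular $d \leq g - \gamma$; this yields $m \leq 2g - 2\gamma$ and thus $\ell \leq 4g - 4\gamma$.

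I do not anticipate a real obstacle: the whole argument rests on the single closure remark combined with elementary counting of even and odd numbers under an involution. The only care needed is the fixed point $k = m$ in the odd case, which is immediately eliminated by the same closure remark.
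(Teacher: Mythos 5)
Your proof is correct. It rests on the same germ as the paper's argument --- the reflection $k\mapsto \ell-k$ sends nongaps below $\ell$ to gaps below $\ell$ and preserves parity --- but you organize it differently and, for the second inequality, more cleanly. For $\ell\leq 4\gamma-2$ the paper argues by contradiction (if $\ell\geq 4\gamma$ one finds $\gamma$ even nongaps in $[2,4\gamma-2]$ and hence $\gamma+1$ even gaps), which is essentially your counting $a\leq b$, $a+b=m-1$, $\gamma=b+1$ in a different dress. The real divergence is in the bound $\ell\leq 4g-4\gamma$: the paper splits into the cases $g\geq 2\gamma$ (where the bound follows from the first one) and $g\leq 2\gamma-1$ (where it reflects the odd nongaps $o_I,\ldots,o_\gamma$ with $I=2\gamma-g+1$ and invokes the estimate $o_i\leq 2g-2i+1$ of Remark \ref{rem2.0}), whereas you apply the identical involution to the odd integers in $[1,2m-1]$ and compare with the total count $g-\gamma$ of odd gaps of $S$, obtaining the inequality uniformly, with no case distinction and no appeal to Remark \ref{rem2.0}. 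Your handling of the fixed point $k=m$ in the odd case is right; for completeness the same closure remark disposes of the fixed point in the even case when $m$ is even, although there the injectivity of the restriction to nongaps already suffices for $a\leq b$. The symmetric, two-parity-classes presentation is a genuine (if modest) simplification of the paper's proof.
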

       \begin{proof} Suppose that $\ell\geq 4\gamma$. Then in the
   interval $[2,4\gamma-2]$ there are at least $\gamma$ even
   nongaps of $S$ says, $h_1<\ldots<h_\gamma$. Thus $S$ would
   have at least $\gamma+1$ even gaps, namely
   $\ell-h_\gamma<\ldots<\ell-h_1<\ell$, a contradiction.

Now if $4\gamma-2\leq 4g-4\gamma$; i.e., $g\geq 2\gamma$, the proof 
follows. Otherwise, consider $I:=2\gamma-g+1$ which is a positive 
integer with $I\leq \gamma$ as $g\geq \gamma+1$. Suppose that 
$\ell>o_I$, being $(o_j)$ the sequence of odd nongaps of $S$ in 
(\ref{eq2.1}). Thus we obtain $\gamma-I+1=g-\gamma$ odd gaps of $S$, 
namely
   $$
   \ell-o_I<\ldots< \ell-o_\gamma\, ;
   $$
hence $\ell=o_I+1\leq 2g-2I+2=4g-4\gamma$ (cf. Remark \ref{rem2.0}), and the result follows.
   \end{proof}
   \begin{corollary}\label{cor2.1} {\rm (cf. \cite{Strazzanti})} Let $S$ be a
   $\gamma$-hyperelliptic semigroup of genus $g.$ Then $2g\geq
   3\gamma.$
   \end{corollary}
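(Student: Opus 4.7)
The plan is to derive the inequality directly from Lemma \ref{lemma2.1} by combining it with a trivial lower bound on the biggest even gap in terms of $\gamma$. Since Corollary \ref{cor2.1} makes no hypothesis on $\gamma$, I would first dispatch the trivial case $\gamma = 0$, where the claim reduces to $2g \geq 0$.

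Assuming $\gamma \geq 1$, let $\ell$ denote the biggest even gap of $S$, which exists by hypothesis. By Lemma \ref{lemma2.1},
\[
\ell \leq \min(4\gamma - 2,\; 4g - 4\gamma)\, ,
\]
so in particular $\ell \leq 4g - 4\gamma$. On the other hand, all $\gamma$ even gaps of $S$ are positive even integers bounded above by $\ell$, hence they form a subset of $\{2, 4, \ldots, \ell\}$, a set of cardinality $\ell/2$. Therefore $\gamma \leq \ell/2$, i.e., $\ell \geq 2\gamma$. Putting the two bounds together,
\[
2\gamma \leq \ell \leq 4g - 4\gamma\, ,
\]
which yields $6\gamma \leq 4g$, i.e., $2g \geq 3\gamma$.

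There is no real obstacle here: the content of the corollary is essentially packaged inside Lemma \ref{lemma2.1}, and what is being used is precisely the second branch $4g - 4\gamma$ of the minimum (the first branch $4\gamma - 2$ alone only recovers a vacuous inequality of the form $2\gamma \leq 4\gamma - 2$). The only subtle point worth making explicit is that one must extract a lower bound on $\ell$ purely from the definition of $\gamma$ as a count of even gaps, since without such a bound, an upper bound on $\ell$ would not be enough on its own.
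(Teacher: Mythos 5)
Your proof is correct and follows essentially the same route as the paper: both rest on the bound $\ell \leq 4g-4\gamma$ from Lemma \ref{lemma2.1} together with counting the $\gamma$ even gaps inside $[2,\ell]$ (the paper phrases this as $G_2\subseteq[2,4g-4\gamma]$, hence $\gamma\leq 2g-2\gamma$). The only cosmetic difference is that the paper first dispatches the case $g\geq 2\gamma$ as immediate rather than the case $\gamma=0$, whereas you apply the second branch of the minimum uniformly.
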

   \begin{proof} If $g\geq 2\gamma$, the result is clear. Let
   $g\leq 2\gamma-1$. By Lemma \ref{lemma2.1}, $G_2$ is
   contained in the interval $[2,4g-4\gamma]$ and hence
   $2g-2\gamma\geq \gamma$ and we are done.
   \end{proof}
   \begin{corollary}\label{cor2.2} Let $S$ be a
   $\gamma$-hyperelliptic semigroup$.$ Then its smallest odd nongap 
   $O:=o_\gamma(S)$ satisfies $O\geq \max(|2g-4\gamma|+1,3).$
   \end{corollary}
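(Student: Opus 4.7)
The bound $O\geq 3$ is immediate: the standing assumption $\gamma\geq 1$ forces $1\in G(S)$, so the smallest odd nongap cannot equal $1$. The remainder of the proof concerns $O\geq |2g-4\gamma|+1$, which I split into the two halves $O\geq 4\gamma-2g+1$ and $O\geq 2g-4\gamma+1$; each is derived from a short ``translate by $O$'' argument.

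The semigroup law yields the following key observation: since $O\in S$, for any positive $y$ with $O+y\notin S$ one has $y\notin S$. Applied with $y$ of each parity, this produces two injections,
\[
\{h\in G_2:h>O\}\hookrightarrow\{\text{odd gaps of }S\},\qquad h\mapsto h-O,
\]
and
\[
\{\alpha\text{ odd gap}:\alpha>O\}\hookrightarrow G_2,\qquad \alpha\mapsto \alpha-O,
\]
whose targets have sizes $g-\gamma$ and $\gamma$ respectively.

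For the first half, combine the top injection with the trivial bound $|\{h\in G_2:h\leq O\}|\leq (O-1)/2$ (only that many even integers exist in $[2,O-1]$) to obtain $\gamma\leq (O-1)/2+(g-\gamma)$, i.e.\ $O\geq 4\gamma-2g+1$. For the second half, note that by the minimality of $O$ every odd integer in $[1,O-2]$ is already a gap, so exactly $(O-1)/2$ odd gaps lie below $O$; and by (\ref{eq1.1}) every odd gap of $S$ lies in $[1,2g-1]$. Partitioning the $g-\gamma$ odd gaps by position relative to $O$ and applying the second injection gives
\[
g-\gamma \;=\; \frac{O-1}{2}+\bigl|\{\text{odd gaps}>O\}\bigr|\;\leq\; \frac{O-1}{2}+\gamma,
\]
whence $O\geq 2g-4\gamma+1$. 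Together with $O\geq 3$ this is the asserted bound.

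I foresee no genuine obstacle: both halves rest on the same one-line semigroup-law injection, and the only bookkeeping subtleties are that (\ref{eq1.1}) confines odd gaps to $[1,2g-1]$ (so that the partition exhausts them) and that the $(O-1)/2$ gaps in $[1,O-2]$ account precisely for those odd gaps lying strictly below $O$.
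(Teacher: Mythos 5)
Your proof is correct, and it takes a genuinely different route from the paper's. The paper splits into the cases $g\geq 2\gamma$ and $g\leq 2\gamma-1$ and in each one counts \emph{nongaps} manufactured by addition: for $g\geq 2\gamma$ it invokes Lemma \ref{lemma2.1} to locate the $\gamma$ even nongaps $h_1<\cdots<h_\gamma=4\gamma$ in $[2,4\gamma]$ and notes that $O,O+h_1,\ldots,O+h_\gamma$ are $\gamma+1$ odd nongaps, forcing $O+4\gamma\geq 2g+1$; for $g\leq 2\gamma-1$ it again uses Lemma \ref{lemma2.1} together with Remark \ref{rem2.0}, applied to the even nongaps $o_\gamma+o_j$. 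You instead count \emph{gaps}, using the single contrapositive of the semigroup law ($O+y\notin S$ and $O\in S$ imply $y\notin S$) to build the two parity-swapping injections $h\mapsto h-O$; the two halves of $|2g-4\gamma|+1$ then fall out symmetrically from the cardinalities $\gamma$ and $g-\gamma$ of the even and odd gap sets, with no appeal to Lemma \ref{lemma2.1} or Remark \ref{rem2.0}. Your argument is more elementary and self-contained (it is essentially the complementation dual of the paper's), while the paper's version has the advantage of running on the same machinery (Lemma \ref{lemma2.1} and the explicit lists of even nongaps) that the rest of Section \ref{s2} reuses in Examples \ref{ex2.1}--\ref{ex2.3}. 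All the steps in your write-up check out, including the bookkeeping that exactly $(O-1)/2$ odd gaps lie below $O$ by minimality of $O$ and that at most $(O-1)/2$ even gaps can lie below $O$.
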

   \begin{proof} Clearly $O\geq 3$ since $\gamma\geq 1$. Let
   $g\geq 2\gamma$. By Lemma \ref{lemma2.1} in
   $[2,4\gamma]\cap S$ there are exactly $\gamma$ even nongaps, say
    $h_1<\ldots<h_\gamma=4\gamma$. Thus the elements $O$ and
    $O+h_j$ are $\gamma+1$ odd nongaps of $S$.
    Since $S$ has exactly $\gamma$ odd nongaps in $[1,2g-1]$, then
    $O+4\gamma\geq 2g+1$ and the result follows.

    Now let $g\leq 2\gamma-1$. Here, by Lemma \ref{lemma2.1}, in
    $[2,4g-4\gamma]\cap S$ there are exactly $2g-3\gamma$ even
    nongaps. Consider the sequence
    $2o_\gamma<\ldots<o_\gamma+o_{4\gamma-2g}$ of
    $2g-3\gamma+1$ elements. Thus $o_\gamma+o_{4\gamma-2g}\geq
    4g-4\gamma+2$. Since $o_{4\gamma-2g}\leq 6g-8\gamma+1$ by Remark \ref{rem2.0}, we are done.
    \end{proof}
    As a way of illustration, next we describe $1$-hyperelliptic
    and $2$-hyperelliptic semigroups.
       \begin{example}\label{ex2.1} Let $\gamma=1$ and thus
    $g\geq 2$. Then $G_2=\{2\}$ by Lemma
    \ref{lemma2.1} and $o_1\geq \max(2g-3,3)$ by Corollary
    \ref{cor2.2}. Thus we obtain two types of
    $1$-hyperelliptic semigroups of genus $g$, namely
    $\langle 4,6,2g-3\rangle$
    with $g\geq 3$, and $\langle 4,6,2g-1,2g+1\rangle $
    with $g\geq 2$.
       \end{example}
       \begin{example}\label{ex2.2} Let $\gamma=2$ and hence
    $g\geq 3$. Let $g=3$. Then $G_2=\{2,4\}$ by Lemma
    \ref{lemma2.1} and thus $S=\langle 3, 5,7\rangle$. Now let
    $g\geq 4$. By Lemma \ref{lemma2.1} there is missing just
    one even nongap in $S\cap [4,6]$, and by Corollary \ref{cor2.2}
    $o_2\geq
    \max(2g-7,3)$. For $g=4$ we have the following four possibilities 
of $2$-hyperelliptic semigroups:
    $\langle 3,5\rangle$, $\langle 3,7,8\rangle$,
    $\langle 4,5,7\rangle$, $\langle 5,6,7,8,9\rangle$.

    So let $g\geq 5$ and thus $o_2\geq 2g-7$. Here we obtain the 
following seven
    families of $2$-hyperelliptic semigroups of genus $g$:
       \begin{enumerate}
    \item[\rm(1)] $\langle 4,10,2g-7\rangle$ with $g\geq 6$;
    \item[\rm(2)] $\langle 4,10,2g-5,2g+1\rangle$;
    \item[\rm(3)] $\langle 4,10,2g-3,2g-1\rangle$;
    \item[\rm(4)] $\langle 6,8,10,2g-7\rangle$;
    \item[\rm(5)] $\langle 6,8,10,2g-5,2g-3\rangle$;
    \item[\rm(6)] $\langle 6,8,10,2g-5,2g-1\rangle$;
    \item[\rm(7)] $\langle 6,8,10,2g-3,2g-1,2g+1\rangle$.
    \end{enumerate}
      \end{example}
      \begin{remark}\label{rem2.11} The examples above were already
      handled, among others, by Garcia \cite{Arnaldo} and Oliveira-Pimentel
      \cite{Oliveira-Pimentel} who moreover noticed that
   all of them are Weierstrass semigroups; this property is also
  true for $3$-hyperelliptic curves (see Komeda \cite{Komeda}).
  We point out that there are numerical semigroups which are not
  Weierstrass; cf. \cite{Torres1}.
     \end{remark}
The following computations have to do with Corollary \ref{cor2.1}.
      \begin{example}\label{ex2.3} {\rm (cf. \cite{Strazzanti})} We look for
    $\gamma$-hyperelliptic semigroups $S$ of genus $g$ such that
    $g=\lceil 3\gamma/2\rceil$.

    {\bf Case $\gamma$ even.} For example for $\gamma=2$ and
    $g=3$, $S=\langle 3, 5, 7\rangle$, as one can easily see from
    Example \ref{ex2.2}. In general, we show that $S$
    is generated by the set $\Sigma:=\{\gamma+2i-1:
    i=1,\ldots,\gamma+1\}$. Indeed, here $o_\gamma\geq \gamma+1$
   by Corollary \ref{cor2.2}. Since in
   $[\gamma+1,2g-1]$ there are exactly $\gamma$ odd numbers, then 
   the $g-\gamma=\gamma/2$ odd gaps of $S$ are precisely
   the odd numbers in $[1,\gamma-1]$. On the other hand,
   Lemma \ref{lemma2.1} implies $G_2(S)\subseteq [2, 2\gamma]$ so 
that the $g-\gamma$ even numbers in $[2\gamma+2,2g]$ are even 
nongaps. Thus 
$G(S)=\{2i:i=1,\ldots,\gamma\}\cup\{2i-1:i=1,\ldots,\gamma/2\}$,
   or equivalently, $S$ is generated by $\Sigma$ as follows
   from e.g. \cite[Sect. 3(III)]{Selmer}.

{\bf Case $\gamma$ odd.} Here $2g=3\gamma+1$,
   $4g-4\gamma=2\gamma+2$. If $\gamma=1$ and hence $g=2$,
   Example \ref{ex2.1} shows that $S=\langle 3,4,5\rangle $.
   Let $\gamma\geq 3$ and so Lemma \ref{lemma2.1} implies
   $G_2\subseteq [2,2\gamma+2]$. Since in
   $[2\gamma+4,3\gamma+1]$ we have $(\gamma-1)/2=g-\gamma-1$
   even numbers, $S$ has just one even nongap $x$
   missing in
   the interval $[\gamma+3, 2\gamma+2]$. This gives
   $(\gamma+1)/2$ possibilities for the selection of $x\, (*)$ so
   that the even nongaps in $S\cap [2,2g]$ are the elements
   $\{x\}\cup\{2\gamma+2+2i:i=1,\ldots, (\gamma-1)/2\}$.

   Next we look for the odd nongaps of $S$; we have that
   $\gamma\leq o_\gamma\leq \gamma+2$ by Corollary \ref{cor2.2}
   and the definition of $o_\gamma$.

   {\bf 1.} Let $o_\gamma=\gamma+2$. In the interval
   $[\gamma+2,3\gamma]$ there are precisely $\gamma$ odd integers and
   thus the set of odd nongaps of $S$ in $S\cap [1,2g]$ is
   $\{\gamma+2i:i=1,\ldots,\gamma\}$.

   Then for each $\gamma$ odd we obtain $(\gamma+1)/2$
   $\gamma$-hyperelliptic semigroups of genus $g=(3\gamma+1)/2$.

   {\bf 2.} Let $o_\gamma=\gamma$. In this case $x=2\gamma$ in $(*)$
   above. In the interval $[\gamma+2,3\gamma]$ there are $\gamma$ odd
   numbers from which we have to choose $\gamma-1$ of them. If
   $\gamma+2\in S$, $2\gamma+2\in S$, a contradiction. Thus the odd
   nongaps in $S\cap [1,2g]$ are determined, namely those in the set
   $\{\gamma\}\cup\{\gamma+3+2i-1:i=1,\ldots,\gamma-1\}$; i.e. we 
just
   obtain one numerical semigroup in this case.
       \end{example}
   Now we study a natural stratification of the family $\cS_g$
   defined above, by taking into
   consideration even gaps. As a matter of fact, we collect the
   subfamily of $\gamma$-hyperelliptic semigroups of genus $g$:
   $$
\cS_{\gamma}(g):=\{S\in\cS_g: \gamma(S)=\gamma\}\, ,
  \quad\text{and thus}
   $$
   \begin{equation}\label{eq2.2}
   \cS_g=\bigcup_{\gamma=0}^{\lfloor 2g/3\rfloor}S_\gamma(g)
   \end{equation}
   by Corollary \ref{cor2.1}.
   The following definition was introduced by Rosales et al.
   \cite{Rosales} in connection with Diophantine inequalities;
   see also \cite[p. 371]{Torres2}, or the proof of
   \cite[Scholium 3.5]{Torres1}, where this concept is
   related to St\"ohr's examples concerning symmetric semigroups
   which are not Weierstrass semigroups.
      \begin{definition}\label{def2.1} The {\em one half} of a 
numerical semigroup $S$ is $S/2:=\{s\in \bN_0: 2s\in S\}.$
   \end{definition}
We notice that $g(S/2)=\gamma(S)$; in particular, we have a natural 
parametrization of the family 
$\cS_{\gamma}(g)$ onto $\cS_\gamma$, where $2g\geq 3\gamma$, by 
means of the function
    \begin{equation}\label{eq2.3}
  \x=\x_\gamma(g): \cS_\gamma(g)\to \cS_\gamma\, ,\quad S
  \mapsto S/2\, .
    \end{equation}
    This map is certainly surjective: Let $T\in \cS_\gamma$, then 
$\x(S)=T$, where
   $$
S:=2T\cup\{2g-2\gamma+i: i\in \bN\}\in \cS_{\gamma}(g)\, ,
   $$
being $2T:=\{2t:t\in T\}$; see also \cite{DAS}.
   \begin{remark}\label{rem2.2} Indeed, any $S\in \cS_{\gamma}(g)$
   can be uniquely written as being:
   $$
S=2(S/2)\cup\{o_\gamma<\ldots<o_1\}\cup \{2g+i:i\in\bN_0\}\, ,
   $$
   where $o_\gamma,\ldots,o_1$ are certain odd numbers in
   $[O,2g-1]$ (cf. Remark \ref{rem2.0}) with $O=\max\{|2g-4\gamma|+1,3\}$ by Corollary 
\ref{cor2.2}. See also \cite{Aur}, \cite{Gu-Tang}.
   \end{remark}

   \section{On the family $\cS_{\gamma}(g)$}\label{s3}

   In this section we deal with the family $\cS_{\gamma}(g)$ of 
numerical semigroups of genus $g$ whose number of even gaps equals $\gamma$.
   Throughout we assume $2g\geq 3\gamma$ (cf. Corollary 
\ref{cor2.1}).
     \begin{definition}\label{def3.1} Let $t\in\bZ.$ The
     $t$-translation of a numerical semigroup $S$ is the map
     $\Phi_t:S\to \bZ$ defined by
   $$
   s\mapsto \begin{cases} s & \text{if $s\equiv 0\pmod{2}$}\, ,\\
   s-t & \text{otherwise}\, .\\
   \end{cases}
   $$
   \end{definition}
      \begin{lemma}\label{lemma3.1} Let $t=2g-6\gamma,$ $S\in\cS_\gamma(g).$ Then $\Phi_t(S)\in 
   \cS_\gamma(3\gamma).$ 
   \end{lemma}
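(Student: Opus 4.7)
The key observation is that $t = 2g - 6\gamma$ is even, so $\Phi_t$ preserves parity: it fixes each even element of $S$ and sends each odd element $s$ to $s - t$. Writing $T := S/2$ and letting $O$ denote the set of odd elements of $S$, this means $\Phi_t(S) = 2T \cup (O - t)$. My plan is to verify in turn: (i) $\Phi_t(S) \subseteq \bN_0$; (ii) closure of $\Phi_t(S)$ under addition; (iii) cofiniteness of $\Phi_t(S)$ in $\bN_0$; and (iv) the correct split of the gap count, namely $\gamma$ even and $2\gamma$ odd.

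Step (i) reduces to the inequality $o_\gamma - t \geq 1$: when $g \geq 3\gamma$ (so $t \geq 0$ and $g \geq 2\gamma$), Corollary~\ref{cor2.2} gives $o_\gamma \geq 2g - 4\gamma + 1 \geq t + 1$; when $g < 3\gamma$ the shift $-t$ is positive and the inequality is immediate. Step (ii) is the main obstacle, and the only nontrivial case is odd$+$odd: for $o, o' \in O$, the sum $(o-t) + (o'-t) = (o+o') - 2t$ is even and must lie in $2T$, which is equivalent to $(o+o')/2 - t \in T$. Now $o + o' \in S$ is even, hence $(o+o')/2 \in T$; a short case analysis on the sign of $t$, using the bound $o, o' \geq o_\gamma \geq \max(|2g-4\gamma|+1, 3)$ from Corollary~\ref{cor2.2}, yields $(o+o')/2 - t \geq 2\gamma + 1$ in every case. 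Since $g(T) = \gamma$, applying (\ref{eq1.1}) to $T$ gives $\{2\gamma + i : i \in \bN_0\} \subseteq T$, and so $(o+o')/2 - t \in T$ as required.

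For (iii) and (iv), parity preservation identifies the even parts of $\Phi_t(S)$ and $S$, so the even gaps coincide and there are exactly $\gamma$ of them. For the odd elements of $\Phi_t(S)$, Remark~\ref{rem2.0} gives $o_i - t \leq 2g - 1 - t = 6\gamma - 1$, while the lower bound from step (ii) gives $o_i - t \geq 2\gamma + 1$; thus $\{o_i - t\}_{i=1}^{\gamma}$ is a set of $\gamma$ distinct odd integers inside the $2\gamma$ odd integers of $[2\gamma+1, 6\gamma-1]$. Every odd integer $\geq 6\gamma + 1$ is the $\Phi_t$-image of an odd integer $\geq 2g$ in $S$, and every even integer $\geq 6\gamma$ already lies in $S$ by Lemma~\ref{lemma2.1}, establishing (iii). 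Counting, the $3\gamma$ odd integers in $[1, 6\gamma - 1]$ contain precisely $\gamma$ elements of $\Phi_t(S)$, leaving $2\gamma$ odd gaps. The total is $\gamma + 2\gamma = 3\gamma$ gaps, $\gamma$ of them even, so $\Phi_t(S) \in \cS_\gamma(3\gamma)$.
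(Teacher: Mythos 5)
Your proof is correct and follows essentially the same route as the paper's: the heart of both arguments is the inequality $o_\gamma(S)-t\geq 2\gamma+1$ (equivalently $2(o_\gamma(S)-t)\geq 4\gamma+2$) obtained from Corollary~\ref{cor2.2}, which guarantees that sums of two translated odd nongaps exceed every even gap, followed by the same count of $2\gamma$ odd gaps in $[1,6\gamma-1]$. Your use of (\ref{eq1.1}) applied to $T=S/2$ in place of the paper's appeal to Lemma~\ref{lemma2.1} is only a cosmetic repackaging, since the bound $4\gamma-2$ in that lemma is exactly (\ref{eq1.1}) for $T$ doubled.
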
 
   \begin{proof} We first show that $\Phi(S):=\Phi_t(S)$ 
   is indeed a numerical semigroup. By Lemma \ref{lemma2.1} it is enough to notice that 
   $2(o_\gamma(S)-t)\geq 4\gamma+2$ which is clear from Corollary \ref{cor2.2} and the 
   selection of $t$. In particular, $\Phi(S)\subseteq \bN_0$ with 
   $\gamma(\Phi(S))=\gamma$. Next we show that $g(\Phi(S))=3\gamma$.

   Let $x=2g+i\in S$, $i\in\bN_0$. Then $x-t=6\gamma+i\in
   \Phi(S)$ so that $y\in \Phi(S)$ for all $y\geq 6\gamma$. In $[1,6\gamma-1]$ we have
   $(3\gamma-\gamma)=2\gamma$ odd gaps of $\Phi(S)$; hence $g(\Phi(S))=3\gamma$.
        \end{proof}
      Thus Definition \ref{def3.1} with $t=2g-6\gamma$ induces a map
      $$
      \tilde\Phi_t: \cS_{\gamma}(g)\to \cS_\gamma(3\gamma)\, ,
      \quad S\mapsto\Phi_t(S)\, .
      $$
          \begin{theorem}\label{thm3.1} The map $\tilde\Phi_t$ above is injective$,$ and it is 
          bijective if and only if $g\geq 3\gamma.$ 
        \end{theorem}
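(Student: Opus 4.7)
The plan is threefold: dispose of injectivity from the definition of $\Phi_t$, prove sufficiency ($g\geq 3\gamma$ gives surjectivity) by writing down an explicit inverse and verifying it lands in $\cS_\gamma(g)$, and disprove surjectivity in the complementary range $3\gamma\leq 2g<6\gamma$ by exhibiting a concrete target with no preimage.

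Injectivity is immediate: $\Phi_t$ is the identity on even integers and translation by $-t$ on odd integers, hence extends to a bijection $\bZ\to\bZ$; so the preimage of $\Phi_t(S)$ reconstructs $S$ uniquely.

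For the sufficiency direction assume $g\geq 3\gamma$, equivalently $t\geq 0$. Given $T\in\cS_\gamma(3\gamma)$, I would define
\[
S := 2(T/2)\,\cup\,\{o+t:o\text{ odd nongap of }T\text{ in }[1,6\gamma-1]\}\,\cup\,\{2g+i:i\in\bN_0\}.
\]
Using Remark \ref{rem2.2} and a direct count, the identities $\gamma(S)=\gamma$, $g(S)=g$ and $\Phi_t(S)=T$ drop out of this description, so the only nontrivial check is closure under addition. The even+even and even+odd cases are handled by the $\{2g+i\}$-padding: an odd nongap of $T$ that sums with another element into $[6\gamma,\infty)$ shifts, after adding $t$, past $2g$. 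The delicate case is odd+odd: the shifted nongaps $o+t$, $o'+t$ produce $o+o'+2t$, and by Corollary \ref{cor2.2} applied to $T\in\cS_\gamma(3\gamma)$ one has $o,o'\geq 2\gamma+1$, so $o+o'+2t\geq 4\gamma+2>4\gamma-2$; Lemma \ref{lemma2.1} identifies this value as an even nongap of $T$, hence as an element of $2(T/2)\subseteq S$. Precisely here the hypothesis $t\geq 0$ is essential.

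For the necessity direction assume $3\gamma\leq 2g<6\gamma$, so $t<0$. I would exhibit a $T\in\cS_\gamma(3\gamma)$ not in the image, taking one whose smallest odd nongap realizes the Corollary \ref{cor2.2} bound $2\gamma+1$ (such semigroups exist, e.g.\ analogues of the ``type (1)'' family of Example \ref{ex2.2}). In the subrange $\lceil 3\gamma/2\rceil\leq g<2\gamma$ the corresponding odd nongap of any putative preimage would be $2\gamma+1+t=2g-4\gamma+1\leq 0$, so no preimage exists. In the subrange $2\gamma\leq g<3\gamma$ that shift is still positive, but closure under doubling in the putative preimage would force $2(2\gamma+1)+2t=4g-8\gamma+2$ into $S$; since the even parts of $S$ and $T$ must coincide, one needs $4g-8\gamma+2\in T$, and this fails whenever $T$ is chosen so that $4g-8\gamma+2\in[2,4\gamma-2]$ is one of its even gaps. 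The main obstacle is producing, for each admissible $(\gamma,g)$, a single $T\in\cS_\gamma(3\gamma)$ satisfying both constraints simultaneously (smallest odd nongap $=2\gamma+1$ together with $4g-8\gamma+2$ being an even gap); this should follow from the flexibility of $\cS_\gamma(3\gamma)$ visible through the parametrization $S\mapsto S/2$ of Section \ref{s2}, but it requires a careful case analysis for the smallest values of $\gamma$.
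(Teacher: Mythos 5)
Your injectivity argument and your treatment of the case $g\geq 3\gamma$ are sound and essentially reproduce the paper's proof: the paper also inverts $\tilde\Phi_t$ by the $(-t)$-translation and reduces the semigroup property of the candidate preimage to the single inequality $2(o_\gamma(T)+t)\geq 4\gamma+2$, which is exactly your ``odd plus odd'' case via Corollary \ref{cor2.2} and Lemma \ref{lemma2.1}.

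The genuine gap is in the necessity direction, and you flag it yourself: you reduce non-surjectivity to the existence, for every pair $(\gamma,g)$ with $\lceil 3\gamma/2\rceil\leq g<3\gamma$, of some $T\in\cS_\gamma(3\gamma)$ whose smallest odd nongap equals $2\gamma+1$ \emph{and} for which $4g-8\gamma+2$ is an even gap, but you never produce such a $T$; without the witness the ``only if'' half of the theorem is not proved. The paper closes this with the single semigroup $\langle 4,2\gamma+1\rangle\in\cS_\gamma(3\gamma)$, whose smallest odd nongap is $2\gamma+1$ and whose even gaps are exactly the integers $\equiv 2\pmod 4$ in $[2,4\gamma-2]$. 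If $T'\in\cS_\gamma(g)$ were a preimage, then $o_\gamma(T')=2\gamma+1+t$, hence $h:=2o_\gamma(T')=4\gamma+2+2t\in T'$; since $t=2g-6\gamma$ is even and $\leq -2$, one gets $h\equiv 2\pmod 4$ and $h\leq 4\gamma-2$, so either $h\leq 0$ (absurd, as then $o_\gamma(T')\leq 0$) or $h$ lies in $[2,4\gamma-2]$ and is therefore an even gap of $\langle 4,2\gamma+1\rangle$, hence of $T'$ itself because $\Phi_t$ fixes even numbers --- contradicting $h\in T'$. In particular no case analysis over $\gamma$, and no splitting into your two subranges, is needed: the residue class modulo $4$ makes your second constraint automatic for this witness, and that single observation is the missing idea.
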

            \begin{proof} The map $\tilde\Phi_t$ is
        injective by its definition. 

             Let $g<3\gamma$ so $t=2g-6\gamma\leq -2$. Then the map $\tilde\Phi_t$ is not 
     surjective. Indeed, let $S:=\langle 4,2\gamma+1\rangle$ which belongs to 
     $\cS_\gamma(3\gamma)$. Suppose there exists
     $T\in\cS_\gamma(g)$ such that $\tilde\Phi_t(T)=S$.
     Then $o_\gamma(T)=2\gamma+1+t$ so that $h:=2o_\gamma(T)=
     4\gamma+2+2t\in T$ with $h\in \Sigma:=\{\ell\in \bN: \ell
     \equiv 2\pmod{4}\, ,2\leq \ell\leq 4\gamma-2\}$.
     It turns out that $\Sigma \subseteq G(T)$, a contradiction.

Conversely, for $T\in\cS_\gamma(3\gamma)$ let us consider the $(-t)$-translation 
$\Phi_{(-t)}:T\to
        \bZ$. Here we have $2( o_\gamma(T)+t)\geq 2(2\gamma+1+2g-6\gamma)\geq 4\gamma+2$ by 
        Corollary \ref{cor2.2} and $g\geq 3\gamma$. Thus we have a map $\tilde\Phi_{(-t)}: 
        \cS_\gamma(3\gamma)\to \cS_\gamma(g)$ induced by $\Phi_{(-t)}$ which is clearly the 
        inverse of $\tilde\Phi_t$.
           \end{proof}
Recall that $N_\gamma(g)=\#\cS_{\gamma}(g)$.
     \begin{corollary}\label{cor3.1}\quad $N_\gamma(g)\leq
     N_\gamma(3\gamma);$ equality holds if and only if 
     $g\geq 3\gamma.$ 
     \end{corollary}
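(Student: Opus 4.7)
The plan is to read off Corollary \ref{cor3.1} directly from Theorem \ref{thm3.1} by taking cardinalities, handling the degenerate range separately for bookkeeping.

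First I would dispose of the range $2g<3\gamma$. Here Corollary \ref{cor2.1} forces $\cS_\gamma(g)=\emptyset$, so $N_\gamma(g)=0$. Since $\cS_\gamma(3\gamma)$ is nonempty (for instance $\langle 4, 2\gamma+1\rangle\in\cS_\gamma(3\gamma)$, as already exploited in the proof of Theorem \ref{thm3.1}), we have $N_\gamma(g)=0<N_\gamma(3\gamma)$. This is consistent with the ``iff'' statement since $g<3\gamma$ in this range, so strict inequality is the desired behaviour.

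Next, in the main range $2g\geq 3\gamma$, Theorem \ref{thm3.1} supplies the injective map $\tilde\Phi_t:\cS_\gamma(g)\to\cS_\gamma(3\gamma)$ with $t=2g-6\gamma$. Passing to cardinalities of these (finite) sets gives $N_\gamma(g)\leq N_\gamma(3\gamma)$ with no further argument. For the equality clause, Theorem \ref{thm3.1} also asserts that $\tilde\Phi_t$ is bijective precisely when $g\geq 3\gamma$; because the map is already injective and the codomain is finite, bijectivity is equivalent to equality of cardinalities. Consequently $N_\gamma(g)=N_\gamma(3\gamma)$ exactly when $g\geq 3\gamma$, and in the complementary subcase $3\gamma\leq 2g$ together with $g<3\gamma$ the injection fails to be surjective, yielding strict inequality.

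There is no genuine obstacle: all of the structural work was carried out in Lemma \ref{lemma3.1} (to see that $\Phi_t$ sends $\cS_\gamma(g)$ into $\cS_\gamma(3\gamma)$) and in Theorem \ref{thm3.1} (injectivity and the precise criterion for surjectivity). The only point requiring care is to make sure the empty-domain case $2g<3\gamma$ is consistent with the ``iff'' statement, which it is, since in that case $g<3\gamma$ and the strict inequality $N_\gamma(g)<N_\gamma(3\gamma)$ holds automatically.
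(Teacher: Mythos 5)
Your proposal is correct and follows the paper's own route: Corollary \ref{cor3.1} is stated there as an immediate consequence of Theorem \ref{thm3.1}, obtained exactly as you do by passing to cardinalities of the finite sets under the injection $\tilde\Phi_t$ and using the bijectivity criterion $g\geq 3\gamma$. Your extra check of the range $2g<3\gamma$ (empty domain by Corollary \ref{cor2.1}) is harmless bookkeeping outside the standing assumption $2g\geq 3\gamma$ of Section \ref{s3} and does not change the argument.
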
 
     \begin{remark}\label{rem3.0} Here we explain how a positive answer to 
     Question \ref{eq1.2} would follow from inequalities (\ref{eq1.3}). 
     From Corollary \ref{cor2.1}, $N_\gamma(g)=0$ for $\gamma>\lfloor 2g/3 \rfloor$ and from 
     Corollary \ref{cor3.1}, $N_\gamma(g)=N_\gamma(3\gamma)$ for $\gamma\leq 
     \lfloor g/3\rfloor$. Indeed, this is the 
     best information that we can obtain by using the map $\Phi_t$ above. In particular,
     $$
n_{g+1}=\sum_{\gamma=0}^{\lfloor (g+1)/3\rfloor}N_\gamma(3\gamma)+
\sum_{\gamma=\lfloor (g+1)/3\rfloor+1}^{\lfloor 2(g+1)/3\rfloor} N_\gamma(g+1)\quad\text{and}
    $$
    $$
 n_g=\sum_{\gamma=0}^{\lfloor g/3\rfloor}N_\gamma(3\gamma)+
\sum_{\gamma=\lfloor g/3\rfloor +1}^{\lfloor 2g/3\rfloor} N_\gamma(g)
   $$
   so that
   $$
n_{g+1}-n_g\geq \sum_{\gamma=\lfloor g/3\rfloor+1}^{\lfloor 2g/3\rfloor}(N_\gamma(g+1)-N_\gamma(g))
   $$
    and (\ref{eq1.3}) implies (\ref{eq1.2}).
  \end{remark}     
     Next we display two tables for some values of 
$N_\gamma(g)$ which show that (\ref{eq1.3}) might be true; 
we obtain such computations by using the GAP package \cite{GAP}. 
    \newpage

   \begin{table}[h]
\centering
\begin{tabular}{|c|c c c c c c c c c c|}
  \hline
\diagbox[height=0.6cm]{$g$}{$\gamma$} & $0$ & $1$ & $2$ & $3$ & $4$ & $5$ & $6$ & $7$ & 8 & 9 
\\
\hline
0 & 1 &  &  &  &  &  &  &  &  &  \\ 
\hline
1 & 1 &  &  &  &  &  &  &  &  &  \\
\hline
2 & 1 & 1 &  &  &  &  &  &  &  &  \\
\hline
3 & 1 & 2 & 1 &  &  &  &  &  &  &  \\
\hline
4 & 1 & 2 & 4 &  &  &  &  &  &  &  \\
\hline
5 & 1 & 2 & 6 & 3 &  &  &  &  &  &  \\
\hline
6 & 1 & 2 & 7 & 12 & 1 &  &  &  &  &  \\
\hline
7 & 1 & 2 & 7 & 19 & 10 &  &  &  &  &  \\
\hline
8 & 1 & 2 & 7 & 21 & 32 & 4 &  &  &  &  \\
\hline
9 & 1 & 2 & 7 & 23 & 51 & 33 & 1 &  &  &  \\
\hline
10 & 1 & 2 & 7 & 23 & 62 & 91 & 18 &  &  &  \\
\hline
11 & 1 & 2 & 7 & 23 & 65 & 142 & 98 & 5 &  &  \\
\hline
12 & 1 & 2 & 7 & 23 & 68 & 174 & 257 & 59 & 1 &  \\
\hline
13 & 1 & 2 & 7 & 23 & 68 & 192 & 412 & 271 & 25 &  \\
\hline
14 & 1 & 2 & 7 & 23 & 68 & 197 & 514 & 678 & 197 & 6  \\
\hline
15 & 1 & 2 & 7 & 23 & 68 & 200 & 570 & 1100 & 793 & 92 \\
\hline
16 & 1 & 2 & 7 & 23 & 68 & 200 & 602 & 1409 & 1855 & 606  \\
\hline
17 & 1 & 2 & 7 & 23 & 68 & 200 & 609 & 1595 & 2999 & 2191  \\
\hline
18 & 1 & 2 & 7 & 23 & 68 & 200 & 615 & 1693 & 3890 & 4993  \\
\hline
19 & 1 & 2 & 7 & 23 & 68 & 200 & 615 & 1744 & 4472 & 8126  \\
\hline
20 & 1 & 2 & 7 & 23 & 68 & 200 & 615 & 1756 & 4797 & 10723  \\
\hline
21 & 1 & 2 & 7 & 23 & 68 & 200 & 615 & 1764 & 4959 & 12528  \\
\hline
22 & 1 & 2 & 7 & 23 & 68 & 200 & 615 & 1764 & 5034 & 13616  \\
\hline
23 & 1 & 2 & 7 & 23 & 68 & 200 & 615 & 1764 & 5053 & 14191  \\
\hline
24 & 1 & 2 & 7 & 23 & 68 & 200 & 615 & 1764 & 5060 & 14469  \\
\hline
25 & 1 & 2 & 7 & 23 & 68 & 200 & 615 & 1764 & 5060 & 14589  \\
\hline
26 & 1 & 2 & 7 & 23 & 68 & 200 & 615 & 1764 & 5060 & 14611  \\
\hline
27 & 1 & 2 & 7 & 23 & 68 & 200 & 615 & 1764 & 5060 & 14626  \\
\hline
   \end{tabular}
\vspace{0.3cm}
\caption{A few values for $N_{\gamma}(g)$}
   \end{table}
   \newpage

   \begin{table}[h]
\centering
\begin{tabular}{|c|c c c c c c c c c| c|}
  \hline
\diagbox[height=0.6cm]{$g$}{$\gamma$} & 10 & 11 & 12 & 13 & 14 & 15 & 16 & 17 & 18 & $n_g$ 
\\
\hline
0 &  &  &  &  &  &  &  &  &  & 1 \\ 
\hline
1 &  &  &  &  &  &  &  &  &  & 1 \\
\hline
2 &  &  &  &  &  &  &  &  &  & 2 \\
\hline
3 &  &  &  &  &  &  &  &  &  & 4 \\
\hline
4 &  &  &  &  &  &  &  &  &  & 7 \\
\hline
5 &  &  &  &  &  &  &  &  &  & 12 \\
\hline
6 &  &  &  &  &  &  &  &  &  & 23 \\
\hline
7 &  &  &  &  &  &  &  &  &  & 39 \\
\hline
8 &  &  &  &  &  &  &  &  &  & 67 \\
\hline
9 &  &  &  &  &  &  &  &  &  & 118 \\
\hline
10 &  &  &  &  &  &  &  &  &  & 204 \\
\hline
11 &  &  &  &  &  &  &  &  &  & 343 \\
\hline
12 &  &  &  &  &  &  &  &  &  & 592 \\
\hline
13 &  &  &  &  &  &  &  &  &  & 1001 \\
\hline
14 &  &  &  &  &  &  &  &  &  & 1693 \\
\hline
15 & 1 &  &  &  &  &  &  &  &  & 2857 \\
\hline
16 & 33 &  &  &  &  &  &  &  &  & 4806 \\
\hline
17 & 343 & 7 &  &  &  &  &  &  &  & 8045 \\
\hline
18 & 1836 & 138 & 1 &  &  &  &  &  &  & 13467 \\
\hline
19 & 6033 & 1130 & 43 &  &  &  &  &  &  & 22464 \\
\hline
20 & 13317 & 5335 & 544 & 8 &  &  &  &  &  & 37396 \\
\hline
21 & 21764 & 16447 & 3624 & 191 & 1 &  &  &  &  & 62194 \\
\hline
22 & 29209 & 35392 & 15365 & 1897 & 53 &  &  &  &  & 103246 \\
\hline
23 & 34628 & 57925 & 44575 & 11098 & 804 & 9 &  &  &  & 170963 \\
\hline
24 & 38096 & 78602 & 93919 & 43262 & 6485 & 254 & 1 &  &  & 282828 \\
\hline
25 & 40098 & 94469 & 154077 & 119669 & 33525 & 3013 & 64 &  &  & 467224 \\
\hline
26 & 41086 & 105074 & 211576 & 247756 & 120881 & 20945 & 1153 & 10 &  & 770832 \\
\hline
27 & 41541 & 111426 & 257734 & 407238 & 320649 & 98104 & 10873 & 335 & 1 & 1270267 \\
\hline
\end{tabular}
\vspace{0.3cm}
\caption{A few values for $N_{\gamma}(g)$ (cont.)}
  \end{table}

We end up this section by pointing out a result concerning specific 
properties of semigroups $S$ in the fiber $\x^{-1}(T)$ in 
(\ref{eq2.3}), where $T\in \cS_\gamma$. For example, for 
$g,\gamma\in \bN_0$ with $g\geq 3\gamma$, let us consider St\"ohr's 
examples in \cite[p. 48]{Torres1}:
     $$
     S:=2T\cup\{2g-1-2t: t\in\bZ\setminus T\}
     $$
     which are $\gamma$-hyperelliptic symmetric semigroups of genus
     $g$.
     Thus we have:
    \begin{scholium}\label{scholium3.1} Let $g$ and $\gamma$ be
    integers such that $g\geq 3\gamma.$ Then there exists, at least, 
$n_\gamma$ $\gamma$-hyperelliptic symmetric semigroups of genus $g.$
     \end{scholium}

    \section{On the sequence $f_\gamma$}\label{s4}

This section is closely related to Bras-Amor\'os approach 
\cite{Amoros3}; see Theorem \ref{thm4.1}.
   \begin{definition}\label{def4.1} Let $S$ be a numerical
   semigroup.
   \begin{enumerate}
\item[\rm(1)] A set $B\subseteq \bN_0$ is called {\em $S$-closed} if 
for $b\in B$, $s\in S$ we have either $b+s\in B$, or $b+s>\max(B)$.
   \item[\rm(2)] We let $C(S,i)$ denote the collection of
   $S$-closed sets $B$ such that $0\in B$ and $\#B=i$.
   \end{enumerate}
   \end{definition}

   \begin{lemma}\label{lemma4.1} Let $S\in \cS_\gamma,$
   $B\in C(S,\gamma+1).$ Then $\max(B)\leq 2\gamma.$
   \end{lemma}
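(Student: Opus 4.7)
Let $M := \max(B)$ and suppose for contradiction that $M \geq 2\gamma + 1$. The plan is to show that $B$ must contain every semigroup element up to $M$, and then count these elements to contradict $|B| = \gamma + 1$.

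First I would apply the $S$-closed condition at $b = 0 \in B$: for every $s \in S$ with $s \leq M$, the alternative $s > M$ fails, so $s = 0 + s \in B$. Hence
$$
S \cap [0,M] \subseteq B.
$$

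Next I would count $|S \cap [0,M]|$ using the genus hypothesis. Since $g(S) = \gamma$, relation (\ref{eq1.1}) applied to $S$ gives $S \supseteq \{2\gamma + i : i \in \bN_0\}$, so all $\gamma$ gaps of $S$ lie in $[1, 2\gamma - 1]$. Assuming $M \geq 2\gamma$, the $\gamma$ gaps are all contained in $[0,M]$, and therefore
$$
|S \cap [0,M]| = (M+1) - \gamma.
$$
Combined with the inclusion above, $\gamma + 1 = |B| \geq |S \cap [0,M]| = M - \gamma + 1$, which forces $M \leq 2\gamma$. When $M < 2\gamma$ there is nothing to prove, and this completes the argument.

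I do not expect any real obstacle here: the whole content of the lemma is that the $S$-closedness at $0$ forces $B$ to contain a copy of $S \cap [0,M]$, and the genus bound $F(S) \leq 2\gamma - 1$ then makes $S \cap [0,M]$ too large as soon as $M > 2\gamma$. The only care needed is to handle the trivial case $M < 2\gamma$ separately so that the counting identity $|S \cap [0,M]| = M - \gamma + 1$ is applied only when it is valid.
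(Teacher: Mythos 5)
Your proof is correct and follows essentially the same route as the paper: both arguments use $S$-closedness at $0\in B$ to force $B\supseteq S\cap[0,\max(B)]$, and then the genus count (all $\gamma$ gaps lie in $[1,2\gamma-1]$, so $\#(S\cap[0,2\gamma])=\gamma+1$) to show $B$ would be too large if $\max(B)>2\gamma$. Your version is slightly more explicit about the counting than the paper's terse two-line proof, but there is no substantive difference.
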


   \begin{proof} Suppose $\max(B)>2\gamma$. Then
   $F:=\#[0,2\gamma]\cap S\leq \gamma$; since $g(S)=\gamma$,
   $F=\gamma+1$ which gives rise to a contradiction.
    \end{proof}

   \begin{definition}\label{def4.2} {\rm (\cite{Amoros3})}
   For $\gamma\in \bN_0,$ $f_\gamma:= 
\sum_{S\in\cS_\gamma}\#C(S,\gamma+1)$.
   \end{definition}

The main result of this section is the following. Notation as in 
Section \ref{s3}.

   \begin{theorem}\label{thm4.1} For $\gamma\in\bN_0,$
   $f_\gamma=N_\gamma(3\gamma)=\#\cS_\gamma(3\gamma).$
   \end{theorem}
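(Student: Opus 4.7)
The plan is to exhibit an explicit bijection
$$
\Psi:\cS_\gamma(3\gamma)\;\longrightarrow\;\bigsqcup_{S\in\cS_\gamma}C(S,\gamma+1),
$$
which would immediately give $N_\gamma(3\gamma)=\sum_{S\in\cS_\gamma}\#C(S,\gamma+1)=f_\gamma$. The forward map is suggested by Remark \ref{rem2.2}: for $T\in\cS_\gamma(3\gamma)$ set $S:=T/2\in\cS_\gamma$ (by the parametrization (\ref{eq2.3})) and let $O:=o_\gamma(T)$ be the smallest odd nongap of $T$. By Corollary \ref{cor2.2} with $g=3\gamma$, we have $O\geq 2\gamma+1$. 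Define
$$
B_T:=\{b\in\bN_0:O+2b\in T\text{ and }O+2b\leq 6\gamma+1\}.
$$
Since the $\gamma$ odd nongaps $o_\gamma<\ldots<o_1$ of $T$ in $[1,6\gamma-1]$ together with $6\gamma+1\in T$ (recall $T\supseteq[6\gamma,\infty)$) exhaust the odd numbers of $T$ in $[O,6\gamma+1]$, one has $\#B_T=\gamma+1$, and $0\in B_T$ since $O\in T$. The $S$-closure of $B_T$ is almost tautological: for $b\in B_T$ and $s\in S$, closure of $T$ gives $O+2(b+s)\in T$, and then either $O+2(b+s)\leq 6\gamma+1$ (so $b+s\in B_T$) or $O+2(b+s)>6\gamma+1$ (so $b+s>\max(B_T)=(6\gamma+1-O)/2$). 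Note also $\max(B_T)\leq 2\gamma$ (matching Lemma \ref{lemma4.1}) because $O\geq 2\gamma+1$.

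For the inverse map, given $(S,B)$ with $S\in\cS_\gamma$ and $B\in C(S,\gamma+1)$, let $M:=\max(B)$ and set $O:=6\gamma+1-2M$; by Lemma \ref{lemma4.1}, $M\leq 2\gamma$ so $O\geq 2\gamma+1\geq 3$. Define
$$
T:=2S\;\cup\;\{O+2b:b\in B\}\;\cup\;\{6\gamma+1+2k:k\in\bN\}.
$$
By construction $T\supseteq\{6\gamma+i:i\in\bN_0\}$ and the even part of $T$ is $2S$, giving $\gamma$ even gaps; the odd nongaps in $[1,6\gamma+1]$ are exactly $\{O+2b:b\in B\}$ (that is, $\gamma+1$ of them, with one equal to $6\gamma+1$), so there are $2\gamma$ odd gaps and $g(T)=3\gamma$ with $\gamma(T)=\gamma$.

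The main obstacle is verifying that $T$ is a numerical semigroup, i.e.\ closed under addition. The only nontrivial cases are odd $+$ even and odd $+$ odd. For odd $+$ even with both elements below $6\gamma+1$: we need $(O+2b)+2s\in T$ for $b\in B$, $s\in S$, which is precisely the $S$-closed hypothesis on $B$ translated via $b\mapsto O+2b$. For odd $+$ odd: $(O+2b)+(O+2b')=2(O+b+b')$ must lie in $2S$, i.e.\ $O+b+b'\in S$; but $O+b+b'\geq O\geq 2\gamma+1>F(S)$ since $S\in\cS_\gamma$ forces $S\supseteq\{2\gamma,2\gamma+1,\ldots\}$ by (\ref{eq1.1}), so this holds automatically. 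The remaining additions involving odd numbers $\geq 6\gamma+3$ produce sums lying well beyond $6\gamma$ and fall into $T$ trivially.

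Finally I would check that the two constructions are mutual inverses: starting from $T$, the recovered $O$ satisfies $6\gamma+1-2\max(B_T)=6\gamma+1-(6\gamma+1-O)=O$, and the reconstructed $T$ has precisely the same even part $2S=2(T/2)$ and the same odd nongaps in $[1,6\gamma+1]$, hence coincides with $T$; conversely, starting from $(S,B)$ and passing through $T$ returns $(T/2,B_T)=(S,B)$ by direct inspection. This gives the desired bijection and proves $f_\gamma=N_\gamma(3\gamma)$.
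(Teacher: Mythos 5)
Your proof is correct and is essentially the paper's own argument: you construct the same bijection between $\cS_\gamma(3\gamma)$ and $\bigsqcup_{S\in\cS_\gamma}C(S,\gamma+1)$ (your formula $O+2b=6\gamma+1-2\max(B)+2b$ is exactly the paper's map $\F$, fibered over $\x^{-1}(T)$), relying on the same Lemma \ref{lemma4.1} and the same parametrization (\ref{eq2.3}). The only difference is that you spell out the semigroup-closure verification that the paper delegates to ``a similar proof to the one of Lemma \ref{lemma3.1}.''
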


   \begin{proof} Let $\x=\x_\gamma(3\gamma): \cS_\gamma(3\gamma)\to 
\cS_\gamma$,
   $S\mapsto S/2$ (see Definition \ref{def2.1}). The result follows 
from the
   following computations.

   {\bf Claim.} There is a bijective map $\F$ between the sets 
$C(T,\gamma+1)$ and $\x^{-1}(T)$ with $T\in\cS_\gamma$.

In fact, for $B\in C(T,\gamma+1)$ we let 
$\F(B)=2T\cup\{2b-2\max(B)+6\gamma+1: b\in B\}$; this map is well 
defined by Lemma \ref{lemma4.1} as a similar proof to the one of Lemma \ref{lemma3.1} shows.

Now let $S\in \x^{-1}(T)$ so that 
$S=2T\cup\{o_\gamma(S)<\ldots<o_1(S)<o_0:=6\gamma+1\}\cup 
\{6\gamma+i:i\in\bN_0\}$ with $o_i$ odd integers. Set $o_i(S)=o_i$ 
and define $b_i:=(o_i-o_\gamma)/2$, $i=0,\ldots,\gamma$. By 
definition it is clear that $B:=\{b_0,\ldots,b_\gamma\}\in 
C(T,\gamma+1)$ and the inverse map of $\F$ is given by $S\mapsto B$.
    \end{proof}
Next we investigate bounds on the sequence $f_\gamma$ by taking 
advantage of Theorem \ref{thm4.1} above; thus we shall be dealing 
with sets of the form:
    \begin{equation}\label{eq4.1}
    S=2T\cup\cO\cup \{6\gamma+j:j\in\bN_0\}\, ,
    \end{equation}
where $T\in \cS_\gamma$, and $\cO=\{o_\gamma<\ldots<o_1\}$ is 
certain set of $\gamma$ odd integers in $[2\gamma+1,6\gamma-1]$.
      \begin{remark}\label{rem4.1} The set $S$ in (\ref{eq4.1})
      belongs to $\cS_\gamma(3\gamma)$ if and only if for
   $t\in T$, $o_j\in\cO$ we have $2t+o_j\in\cO$ or $2t+o_j>6\gamma$.
    \end{remark}
Throughout, we let
   $$
   o_\gamma=2\gamma+2i+1\quad\text{for some
    $i\in\{0,\ldots,\gamma\}$}\, .
    $$
    In addition we set:
    \begin{equation}\label{eq4.2}
    \x^{-1}(T^i):=\{S\in \cS_\gamma(3\gamma): S/2=T,\, o_\gamma(S)=
    2\gamma+2i+1\}\, ,
    \end{equation}
where $\x=\x_\gamma(g)$ is the map in (\ref{eq2.3}) with 
$g=3\gamma$. We notice that 
$\x^{-1}(T)=\cup_{i=0}^\gamma\x^{-1}(T^i)$, and 
$\cS_\gamma(3\gamma)=\cup_{T\in\cS_\gamma}\x^{-1}(T)$.
   \begin{lemma}\label{lemma4.2} Let $T\in\cS_\gamma.$
   With the above notation$,$
      $$
    1\leq  \#\x^{-1}(T^i)\leq \binom{\gamma}{i}\, .
      $$
    \end{lemma}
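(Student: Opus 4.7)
The plan is to pass through the bijection $\F$ from the proof of Theorem \ref{thm4.1}, which identifies $\x^{-1}(T^i)$ with the collection of $B\in C(T,\gamma+1)$ satisfying $\max(B)=2\gamma-i$; indeed, the recipe $o_\gamma(S)=6\gamma+1-2\max(B)$ translates the condition $o_\gamma(S)=2\gamma+2i+1$ from (\ref{eq4.2}) into $\max(B)=2\gamma-i$. Both bounds will be established at this combinatorial level.

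For the upper bound I would first observe that any such $B$ must contain $T\cap[0,2\gamma-i]$: since $0\in B$ and $B$ is $T$-closed, each $t\in T$ satisfies either $t\in B$ or $t>2\gamma-i$. Consequently $[0,2\gamma-i]\setminus B$ has exactly $(2\gamma-i+1)-(\gamma+1)=\gamma-i$ elements, each of which must lie in $G(T)$. Since $|G(T)|=\gamma$, the number of admissible $B$ is bounded by $\binom{\gamma}{\gamma-i}=\binom{\gamma}{i}$.

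For the lower bound it suffices to exhibit one explicit $B$. Set $k:=|G(T)\cap[2\gamma-i+1,2\gamma-1]|$ so that $|G(T)\cap[1,2\gamma-i]|=\gamma-k$; since the interval $[2\gamma-i+1,2\gamma-1]$ contains only $i-1$ integers we have $k\leq i-1$, hence $i-k\geq 1$ whenever $i\geq 1$ (the case $i=0$ is handled by taking $F=\emptyset$). Let $F$ be the set of the $i-k$ largest gaps of $T$ in $[1,2\gamma-i]$ and put
$$
B:=\bigl(T\cap[0,2\gamma-i]\bigr)\cup F.
$$
A direct count yields $|B|=(\gamma-i+1+k)+(i-k)=\gamma+1$, and $\max(B)=2\gamma-i$: if $2\gamma-i\in T$ it lies in the first piece, while otherwise $2\gamma-i$ is the largest gap in $[1,2\gamma-i]$ and hence belongs to $F$.

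The delicate step will be to verify that this $B$ is $T$-closed. For $b\in T\cap[0,2\gamma-i]$ and $t\in T$, closure is automatic because $b+t\in T$, and if $b+t\leq 2\gamma-i$ then $b+t\in T\cap[0,2\gamma-i]\subseteq B$. For $b\in F$ and $t\in T$ with $b+t\leq 2\gamma-i$, the key observation is that either $b+t\in T$ (hence in the first piece of $B$) or $b+t\in G(T)$; in the latter case $b+t\geq b\geq\min(F)$, so $b+t$ lies in $G(T)\cap[\min(F),2\gamma-i]$, which by construction of $F$ coincides with $F$ and is contained in $B$. This produces an element of $C(T,\gamma+1)$ with the required maximum, giving $\#\x^{-1}(T^i)\geq 1$.
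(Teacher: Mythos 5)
Your proof is correct. It is the paper's argument transported through the bijection $\F$ of Theorem \ref{thm4.1}: the paper works directly on the odd nongaps of $S$, splitting $\cO$ into the forced part $\cO(1)=\{o_\gamma+2t: t\in T,\ t\leq 2\gamma-i-1\}$ and a free part $\cO(2)$ of size at most $i$ chosen among $\{o_\gamma+2q: q\in G(T)\}$, and for existence takes $\cO(2)$ to be the largest admissible odd integers. Under the dictionary $b=(o-o_\gamma)/2$ your forced block $T\cap[0,2\gamma-i]$ is exactly $\cO(1)$ and your set $F$ of largest gaps is exactly $\cO(2)$, so the combinatorial core is the same. Your packaging does buy two small things: the upper bound falls out as a clean complementation count ($[0,2\gamma-i]\setminus B$ is a $(\gamma-i)$-subset of $G(T)$, hence at most $\binom{\gamma}{\gamma-i}=\binom{\gamma}{i}$ choices), avoiding the slight extra step the paper needs to pass from $\binom{\#(G(T)\cap[1,2\gamma-i-1])}{\#\cO(2)}$ to $\binom{\gamma}{i}$; and you actually verify closedness of the constructed set via the observation that $F=G(T)\cap[\min(F),2\gamma-i]$ is upward closed among the gaps, whereas the paper only asserts that its construction lands in $\cS_\gamma(3\gamma)$. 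One presentational caveat: since you invoke Theorem \ref{thm4.1}, your proof depends on that bijection being available at this point (it is, as the theorem precedes the lemma), while the paper's proof of the lemma is self-contained.
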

    \begin{proof} Let $t_0=0<t_1<\ldots$ be the
    enumeration of $T$ in increasing order. By (\ref{eq1.1})
    $t_{\gamma+j}=2\gamma+j$ for any $j\in\bN_0$. Set
    $$
    \cO(1)=\{o_\gamma+2t:t\in T,\, t\leq 2\gamma-i-1\}\, .
    $$
    In (\ref{eq4.1}) let us write $\cO$ as the
    disjoint union of $\cO(1)$ and certain set $\cO(2)$.
   If the elements of $\cO(2)$ are the largest odd integers in
      $$
      [2\gamma+1,6\gamma-1]\setminus\cO(1)\, ,
      $$
      then $S$ in (\ref{eq4.1}) belongs to $S_\gamma(3\gamma)$
      and $\#\x^{-1}(T^i)\geq 1$.
      Since $t_{\gamma-i-1}\leq 2\gamma-i-1$, so $\#\cO(2)\leq i$ and 
the upper bound follows.
    \end{proof}
     \begin{remark}\label{rem4.2} The set $\cO$ for both the
     extreme cases
     $i=0,\gamma$ in Lemma \ref{lemma4.2}
  is easy to describe. In fact here we have
  $\cO= \{2\gamma+1+2t_j:
  j=0,\ldots,\gamma\}$ (resp. $\cO=
  \{4\gamma+2j-1:j=1,\ldots \gamma\}$) for $i=0$ (resp.
  $i=\gamma$). Thus
    $$
    \#\x^{-1}(T^0) = \#\x^{-1}(T^\g) = 1\, .
    $$
   \end{remark}
    From now on, unless otherwise stated, we consider $1\leq
    i\leq\gamma-1$.
   \begin{corollary}\label{cor4.1} Let $\gamma\in\bN_0.$ Then
   $$
   n_{\g} \cdot (\g+1) \leq f_{\g} \leq n_{\g} \cdot 2^{\g}\, .
   $$
   \end{corollary}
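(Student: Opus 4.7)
The plan is to string together the results already established in this section. By Theorem \ref{thm4.1}, $f_\gamma = \#\cS_\gamma(3\gamma)$, and the discussion following (\ref{eq4.2}) gives a double stratification
\[
\cS_\gamma(3\gamma) = \bigsqcup_{T\in\cS_\gamma}\x^{-1}(T), \qquad \x^{-1}(T) = \bigsqcup_{i=0}^{\gamma}\x^{-1}(T^i),
\]
both disjoint unions by construction. Therefore
\[
f_\gamma \;=\; \sum_{T\in\cS_\gamma}\sum_{i=0}^{\gamma}\#\x^{-1}(T^i),
\]
and it suffices to bound the inner sum uniformly in $T$.

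Next I would combine Remark \ref{rem4.2} (which gives $\#\x^{-1}(T^0)=\#\x^{-1}(T^\gamma)=1$) with Lemma \ref{lemma4.2} (which gives $1\leq \#\x^{-1}(T^i)\leq \binom{\gamma}{i}$ for $1\leq i\leq \gamma-1$). Adding these bounds over $i=0,\ldots,\gamma$ yields, for every fixed $T\in\cS_\gamma$,
\[
\gamma+1 \;\leq\; \sum_{i=0}^{\gamma}\#\x^{-1}(T^i) \;\leq\; 1+\sum_{i=1}^{\gamma-1}\binom{\gamma}{i}+1 \;=\; \sum_{i=0}^{\gamma}\binom{\gamma}{i} \;=\; 2^{\gamma}.
\]

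Finally I would sum these two inequalities over the $n_\gamma$ elements of $\cS_\gamma$ to obtain
\[
n_\gamma\cdot(\gamma+1)\;\leq\; f_\gamma \;\leq\; n_\gamma\cdot 2^{\gamma},
\]
which is the claim. There is no genuine obstacle here: the argument is purely bookkeeping, since Lemma \ref{lemma4.2} and Remark \ref{rem4.2} have already done all the combinatorial work on each fiber $\x^{-1}(T^i)$. The only subtlety worth emphasizing in writing it up is that the stratification over $T$ and then over $i$ is indeed a partition, so summing the pointwise bounds gives the global bounds without any overcounting.
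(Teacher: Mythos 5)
Your proposal is correct and follows essentially the same route as the paper, which deduces the bounds directly from Theorem \ref{thm4.1}, Lemma \ref{lemma4.2} and the identity $\sum_{i=0}^{\gamma}\binom{\gamma}{i}=2^{\gamma}$; your write-up merely makes the double stratification and the summation over fibers explicit. The appeal to Remark \ref{rem4.2} for $i=0,\gamma$ is harmless but not needed, since Lemma \ref{lemma4.2} already covers those cases.
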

   \begin{proof} It follows from
   Theorem \ref{thm4.1}, Lemma \ref{lemma4.2} and the well-known
   fact $\sum_{i=0}^{\gamma}\binom{\gamma}{i}=2^\gamma$.
  \end{proof}
   \begin{remark}\label{rem4.3} From Corollaries \ref{cor3.1} and
   \ref{cor4.1},
   $$
N_\gamma(g)\leq N_\gamma(3\gamma)=f_\gamma\leq n_\gamma\cdot 
2^\gamma\, .
   $$
   \end{remark}
   \begin{corollary}\label{cor4.2} Let $\varphi=(\sqrt{5}+1)/2$
   be the golden ratio$.$
      \begin{enumerate}
   \item[\rm(1)] For $\e > 0,$
   $\lim_{\g \to \infty} \frac{f_{\g}}{(2\f+\e)^{\g}} = 0;$
   \item[\rm(2)] We have $\lim_{\gamma\to\infty}
   \frac{f_\gamma}{\varphi^\gamma}=\infty.$
   \end{enumerate}
   \end{corollary}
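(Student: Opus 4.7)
The plan is to sandwich $f_\gamma$ using the two-sided bound from Corollary \ref{cor4.1},
\[
n_\gamma(\gamma+1)\;\leq\; f_\gamma\;\leq\; n_\gamma\cdot 2^\gamma,
\]
and to combine it with the asymptotic information on $n_\gamma$ that underlies Zhai's proof of Conjectures (B) and (C) quoted in the introduction.

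For part (1), I would divide the upper bound by $(2\varphi+\epsilon)^\gamma$ and split off a geometric factor:
\[
\frac{f_\gamma}{(2\varphi+\epsilon)^\gamma}\;\leq\;\frac{n_\gamma}{\varphi^\gamma}\cdot\left(\frac{2\varphi}{2\varphi+\epsilon}\right)^\gamma.
\]
Conjecture (C) alone gives $n_\gamma^{1/\gamma}\to\varphi$ (since $n_{\gamma+1}/n_\gamma\to\varphi$), hence for any $\delta$ with $0<2\delta<\epsilon$ one has $n_\gamma\leq(\varphi+\delta)^\gamma$ for all sufficiently large $\gamma$. The right-hand side above is then bounded by $\left((2\varphi+2\delta)/(2\varphi+\epsilon)\right)^\gamma$, which tends to $0$ geometrically because $2\varphi+2\delta<2\varphi+\epsilon$.

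For part (2), the lower bound from Corollary \ref{cor4.1} yields
\[
\frac{f_\gamma}{\varphi^\gamma}\;\geq\;(\gamma+1)\cdot\frac{n_\gamma}{\varphi^\gamma},
\]
so it suffices to show that $n_\gamma/\varphi^\gamma$ stays bounded away from $0$. I would invoke the stronger content of Zhai's theorem, namely the existence of a positive constant $S_\varphi$ with $n_\gamma\sim S_\varphi\cdot\varphi^\gamma$; under this, the right-hand side is asymptotic to $S_\varphi(\gamma+1)$, which diverges.

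The main obstacle is precisely this second half: a naive reading of (C) does not suffice, because the ratio condition $n_{\gamma+1}/n_\gamma\to\varphi$ is consistent with $n_\gamma/\varphi^\gamma\to 0$ (for instance, a sequence behaving like $\varphi^\gamma/\gamma$ satisfies the ratio limit yet would make $n_\gamma(\gamma+1)/\varphi^\gamma$ bounded). One must therefore reference the full asymptotic $n_\gamma\sim S_\varphi\varphi^\gamma$ (equivalently, $\liminf_\gamma n_\gamma/\varphi^\gamma>0$) that is actually established in \cite{Zhai}, rather than merely the ratio limit (C). With that input in place, both halves of the corollary reduce to straightforward manipulations of the two inequalities in Corollary \ref{cor4.1}.
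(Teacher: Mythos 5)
Your proof is correct and follows essentially the same route as the paper: both parts sandwich $f_\gamma$ via Corollary \ref{cor4.1} and then invoke Zhai's theorem, the paper writing the upper bound as $A_\gamma B_\gamma$ with $A_\gamma=n_\gamma/\varphi^\gamma$ convergent and $B_\gamma=(2\varphi/(2\varphi+\epsilon))^\gamma\to 0$, and the lower bound as $A_\gamma(\gamma+1)\to\infty$. Your remark that part (2) genuinely requires the full asymptotic $n_\gamma\sim S\varphi^\gamma$ with $S>0$ (not merely the ratio limit (C)) is exactly the input the paper takes from \cite[Thm.~1]{Zhai}.
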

   \begin{proof} For $\gamma\in\bN_0$, let 
$A_\gamma:=\frac{n_{\g}}{\f^{\g}}$, $B_\gamma:=
\L(\frac{2\f}{2\f+\e}\R)^{\g}$. We notice that $\lim_{\gamma\to 
\infty}A_\gamma$ is a real number by \cite[Thm. 1]{Zhai}.

(1) By Corollary \ref{cor4.1} $
f_{\g}/(2\f+\e)^{\g} \leq A_\gamma\cdot B_\gamma$. Since 
$\lim_{\gamma\to \infty}B_\gamma=0$, the proof follows.

(2) From Corollary \ref{cor4.1} $f_\gamma/\varphi^\gamma\geq 
A_\gamma(\gamma+1)$, and we are done.
    \end{proof} In the remainder part of this section we shall be 
dealing with Remark \ref{rem4.1} toward an improvement of Corollary 
\ref{cor4.1} (see Corollary \ref{cor4.4} below). We start by 
splitting off the set of odd integers in the interval $[2\g+1,6\g-1]$ 
into $\cO$ and $\cL=\{\omega_1<\ldots<\omega_\gamma\}$. Recall that 
$o_\g = 2\g + 2i + 1$ for some $i\in
  \{1,\ldots,\g-1\}$. Then we have a disjoint union
  $\cL=\cL^i(1)\cup\cL^i(2)$, where
  $$
  \cL^i(1):= \{2\g+1< \ldots< 2\g+2i-1\}\, ,\quad\text{and}
  $$
  $$
\cL^i(2):=\{\o_{i+1}<\ldots<\o_\g\}\subseteq \{o_\gamma+2q: q\in
G(T)\, ,q\leq 2\gamma-i-1\}\, .
  $$
    \begin{lemma}\label{lemma4.3} Let $T\in \cS_\gamma,$ $q,
    \bar q\in G(T)$ such that $\bar q-q=t\in T.$ If $S$
    in (\ref{eq4.1}) is a numerical semigroup$,$ then
    $o_\gamma+2q\in \cL^i(2),$ whenever $o_\gamma+2\bar q\in \cL^i(2).$
    \end{lemma}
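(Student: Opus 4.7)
The plan is to argue by contradiction: assume $o_\gamma + 2\bar q \in \cL^i(2)$ but $o_\gamma + 2q \notin \cL^i(2)$, and derive a contradiction using the semigroup property of $S$. The case $t=0$ gives $\bar q = q$ and the conclusion is immediate, so one may freely assume $t\geq 1$ and in particular $q<\bar q$.

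The first step is to pin down where $o_\gamma + 2q$ can possibly sit. Since $S=2T\cup\cO\cup\{6\gamma+j:j\in\bN_0\}$, the odd integers in the window $[2\gamma+1,6\gamma-1]$ are partitioned into $\cO$ (odd nongaps of $S$ in that range) and $\cL=\cL^i(1)\cup\cL^i(2)$ (odd gaps of $S$ in that range). Since $q\in G(T)$ forces $q\geq 1$, we get $o_\gamma+2q\geq (2\gamma+2i+1)+2=2\gamma+2i+3$, which strictly exceeds $\max\cL^i(1)=2\gamma+2i-1$, so $o_\gamma+2q\notin\cL^i(1)$. On the other hand $o_\gamma+2q<o_\gamma+2\bar q\leq 6\gamma-1$, and $o_\gamma+2q\geq 2\gamma+1$, so $o_\gamma+2q$ really does belong to the relevant odd-integer window. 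Under the contradictory hypothesis $o_\gamma+2q\notin\cL^i(2)$, it must therefore lie in $\cO$, hence $o_\gamma+2q\in S$.

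Now the semigroup property finishes the argument. From $t\in T$ we get $2t\in 2T\subseteq S$, and since $S$ is closed under addition,
\[
(o_\gamma+2q)+2t \;=\; o_\gamma+2\bar q \;\in\; S.
\]
But $o_\gamma+2\bar q\in\cL^i(2)\subseteq\cL$, so $o_\gamma+2\bar q\notin\cO$; being odd and bounded above by $6\gamma-1$, it lies neither in $2T$ nor in $\{6\gamma+j:j\in\bN_0\}$. Hence $o_\gamma+2\bar q\notin S$, contradicting the previous display.

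There is no deep obstruction here; the main care is in keeping straight the three-piece decomposition of $S$ and verifying that $o_\gamma+2q$ falls in the right range of odd integers so that membership in $\cO\cup\cL$ is forced. Once those bookkeeping inequalities are in place, the semigroup closure under addition of $2t\in 2T$ to $o_\gamma+2q\in S$ does all the work.
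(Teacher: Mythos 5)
Your proof is correct and is essentially the same argument as the paper's, which compresses it into the single line $o_\gamma+2\bar q=(o_\gamma+2q)+2t$ together with the implicit observation that an odd nongap plus $2t\in 2T$ would again be a nongap. Your version just makes explicit the bookkeeping (that $o_\gamma+2q$ lies in the odd window, misses $\cL^i(1)$ since $q\geq 1$, and hence would lie in $\cO\subseteq S$ if it were not in $\cL^i(2)$), which the paper leaves to the reader.
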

    \begin{proof} We have $o_\gamma+2\bar q=o_\gamma+2q+2t$
    so that $o_\gamma+2q\in \cL^i(2)$.
     \end{proof}
Let us work out a numerical example.
  \begin{example}\label{ex4.1} Notation as in (\ref{eq4.2}). For the numerical semigroup
  $T=\bN_0 \setminus \{1,2,3,6\}$ of genus $\gamma=4$, $\#\x^{-1}(T^0)=\#\x^{-1}(T^4)=1$; 
  we shall compute $\#\x^{-1}(T^i)$ for $i=1,2,3$ so that that $\#\x^{-1}(T)=
  \sum_{i=0}^4\#\x^{-1}(T^i)=
  10$. This gives a method to improve Corollary \ref{cor4.1}; as a matter of fact 
  $$
  5n_4+5=5(n_4-1)+10\leq f_4\leq 16n_4-6=16(n_4-1)+10\, .
   $$
(1) If $i=1$, $o_4=11$,
   $$
   \cL^1(2)=\{\omega_2<\omega_3<\omega_4\}
\subseteq\{11+2q:q\in G(T)\, ,q\leq 6\}=\{13,15, 17, 23\}\, .
   $$
Since $23=11+2\times 6$ by Lemma \ref{lemma4.3} $\cL^1(2)$ can be 
either $\{13,15,17\}$ or $\{13,15, 23\}$; it is a matter of fact 
that these computations define semigroups $S$ in (\ref{eq4.1}) so 
that $\#\x^{-1}(T^1)=2$.

(2) If $i=2$, $o_4=13$,
    $$
  \cL^1(2)=\{\omega_3<\omega_4\}\subseteq\{13+2q:q\in G(T)\, , q\leq 
5\}=\{15,17,19\}\, ,
    $$
    so that $\#\x^{-1}(T^2)=\binom{3}{2}=3$.

(3) If $i=3$, $o_4=15$,
   $$
   \cL^1(2)=\{\omega_4\}\subseteq\{15+2q:q\in G(T)\, ,
q\leq 4\}=\{17,19,21\}\, ,
   $$
   so that $\#\x^{-1}(T^3)=\binom{3}{1}=3$.
   \end{example}
Next we generalize this example. Let $k\in\{0,\ldots,\gamma-1\}$ and 
consider the set
     $$
T=T_k = \bN_0 \setminus \{1,\ldots,\g-1,\g+k\}
     $$
     which is a numerical semigroup of genus $\gamma$ by the
     selection of $k$. We shall compute $\#\x^{-1}(T^i)$ for
     $T=T_k$, $0\leq i\leq \gamma$. With notation as above
        $$
\cL^i(2)\subseteq \{o_\gamma+2q: q\in G(T)\, ,q\leq 2\gamma-i-1\}=
    $$
    $$
    \begin{cases}
\{o_\gamma+2q: 1\leq q\leq \gamma-1\}
\cup\{o_\gamma+2(\gamma+k)\}\, , & \text{if $i+k\leq \gamma-1\, 
,$}\\
\{o_\gamma+2q: 1\leq q\leq \gamma-1\}\, ,& \text{if $i+k>\gamma-1\, 
.$}\\
    \end{cases}
  $$
    \begin{lemma}\label{lemma4.4} Notation as above$.$ Let 
$i\in\{0,\ldots,\gamma\},$ $k\in\{0,\ldots,\gamma-1\}.$
   \begin{enumerate}
\item[\rm(1)] For $T=T_0,$ $\#\x^{-1}(T^i)=\binom{\gamma}{i};$

\item[\rm(2)] Let $k\geq 1,$ $T=T_k.$ If $i+k\leq \gamma-1,$ then
   $$
   \#\x^{-1}(T^i)=\binom{\gamma-k-1}{i}+\binom{\gamma-1}{i-1}\, ;
   $$

   \item[\rm(3)] Let $k\geq 1,$ $T=T_k.$ If $i+k\geq \gamma,$ then
   $$
   \#\x^{-1}(T^i)=\binom{\gamma-1}{i-1}\, .
   $$
   \end{enumerate}
   \end{lemma}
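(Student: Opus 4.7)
The plan is to count, for each $i$, the admissible sets $\cL^i(2)$ of $\gamma-i$ odd integers in $(o_\gamma, 6\gamma-1]$ that give rise to a numerical semigroup $S$ via (\ref{eq4.1}). Two constraints must be satisfied: first, the ``first-order'' constraint established just before Lemma \ref{lemma4.2}, which forces $\cL^i(2) \subseteq C := \{o_\gamma+2q : q \in G(T) \cap [1, 2\gamma-i-1]\}$ (since any $q \in T\setminus\{0\}$ in this range would place $o_\gamma+2q$ in $\cO$ by Remark \ref{rem4.1}); and second, Lemma \ref{lemma4.3}'s downward-closure: if $o_\gamma+2\bar q \in \cL^i(2)$ and $\bar q - q = t \in T$, then $o_\gamma+2q \in \cL^i(2)$. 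Before the enumeration, I would verify that these two constraints are together \emph{sufficient} for $S$ to be a numerical semigroup. Given $o_j = o_\gamma+2s \in \cO$ with $s \geq 1$ and $t \in T\setminus\{0\}$ with $o_j+2t \leq 6\gamma-1$, suppose $o_\gamma+2(s+t) \in \cL^i(2)$; then $s+t \in G(T)$ by the first-order constraint, which forces $s \in G(T)$ (else $s+t \in T$, a contradiction), and Lemma \ref{lemma4.3} applied to $\bar q = s+t$, $q = s$ would force $o_j \in \cL^i(2)$, contradicting $o_j \in \cO$. Thus $\#\x^{-1}(T^i)$ equals the number of $(\gamma-i)$-subsets of $C$ that are downward-closed in the sense of Lemma \ref{lemma4.3}.

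With sufficiency in hand, the enumeration is a case analysis using $G(T_k) = \{1,\ldots,\gamma-1,\gamma+k\}$ and $T_k\setminus\{0\} = \{\gamma,\gamma+1,\ldots\}\setminus\{\gamma+k\}$. For (1), with $T = T_0$, the candidate set $G(T_0) \cap [1, 2\gamma-i-1] = \{1,\ldots,\gamma\}$ has all pairwise differences $\leq \gamma-1$, strictly below the minimum nonzero element of $T_0$; Lemma \ref{lemma4.3} is vacuous and all $\binom{\gamma}{\gamma-i} = \binom{\gamma}{i}$ subsets are admissible. For (3), with $i+k \geq \gamma$, the gap $\gamma+k$ lies outside $[1, 2\gamma-i-1]$, so the candidate set is $\{o_\gamma+2q : q = 1,\ldots,\gamma-1\}$; pairwise differences are $\leq \gamma-2 < \gamma$, Lemma \ref{lemma4.3} is again vacuous, and the count is $\binom{\gamma-1}{\gamma-i} = \binom{\gamma-1}{i-1}$. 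For (2), $i+k \leq \gamma-1$, the gap $\gamma+k$ enters the candidate set, and the only nontrivial triggering of Lemma \ref{lemma4.3} is $\bar q = \gamma+k$, $q \in \{1,\ldots,k\}$ (so that $\bar q - q \in \{\gamma,\ldots,\gamma+k-1\} \subseteq T_k$). Splitting on whether $o_\gamma+2(\gamma+k) \in \cL^i(2)$: in the positive case the $k+1$ elements indexed by $q \in \{1,\ldots,k,\gamma+k\}$ are forced, leaving $\gamma-i-k-1$ to be chosen from $\{o_\gamma+2q : q = k+1,\ldots,\gamma-1\}$ in $\binom{\gamma-1-k}{\gamma-i-k-1} = \binom{\gamma-k-1}{i}$ ways; in the negative case one chooses freely from $\{o_\gamma+2q : q = 1,\ldots,\gamma-1\}$ in $\binom{\gamma-1}{\gamma-i} = \binom{\gamma-1}{i-1}$ ways. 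Summing yields the stated formula.

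The main obstacle is the sufficiency verification in the first paragraph: Lemma \ref{lemma4.3} is stated only as a necessary condition and involves the translate from $o_\gamma$, whereas Remark \ref{rem4.1} must be checked at every $o_j \in \cO$. The argument succeeds because, for $o_j = o_\gamma + 2s \in \cO$ with $s \geq 1$, the hypothesis $o_j + 2t \in \cL^i(2)$ forces $s \in G(T)$ and thereby reduces the general closure to the pairwise condition of Lemma \ref{lemma4.3} at $\bar q = s+t$, $q = s$. Once this reduction is in place, the remaining enumeration is a careful identification of which pairs in $G(T_k) \cap [1, 2\gamma-i-1]$ actually trigger the downward-closure condition, and the case split above is routine.
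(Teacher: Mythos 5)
Your proof is correct and follows essentially the same route as the paper: reduce to counting the subsets of the candidate set $\{o_\gamma+2q : q\in G(T),\ q\le 2\gamma-i-1\}$ that are closed under the implication of Lemma \ref{lemma4.3}, then enumerate by cases according to whether $\gamma+k$ lies in the candidate range and whether $o_\gamma+2(\gamma+k)$ is chosen. Your explicit check that these two conditions are also \emph{sufficient} (via the reduction $s+t\in G(T)\Rightarrow s\in G(T)$, which turns the condition of Remark \ref{rem4.1} into the pairwise condition of Lemma \ref{lemma4.3}) is a welcome addition; the paper disposes of that step by simply citing Remark \ref{rem4.1}.
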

   \begin{proof} (1) For $k=0$ we have
   $\cL^i(2)\subseteq\{o_\gamma+2q: 1\leq q\leq \gamma\}$.
   Thus the number of sets of type as in (\ref{eq4.1}) equals
   $\binom{\gamma}{\gamma-i}=\binom{\gamma}{i}$; all such sets
   belong to $\cS_\gamma(3\gamma)$ by Remark \ref{rem4.1}; the
   result follows.

(2) Here $\cL^i(2)\subseteq\{o_\gamma+2q: 1\leq q\leq \gamma-1\}\cup 
\{o_\gamma+2(\gamma+k)\}$. If $o_\gamma+2(\gamma+k)\in \cL^i(2)$, 
then $o_\gamma+2q\in\cL^i(2)$ by Lemma \ref{lemma4.3}. Thus we 
obtain $\binom{\gamma-k-1}{\gamma-i-k-1}=\binom{\gamma-k-1}{i}$ sets 
of type (\ref{eq4.1}) which belong to $\cS_\gamma(3\gamma)$ by 
Remark \ref{rem4.1}. On the other hand, if $o_\gamma\not\in\cL^i(2)$ 
arguing as above we obtain further 
$\binom{\gamma-1}{\gamma-i}=\binom{\gamma-1}{i-1}$ numerical 
semigroups in $\cS_\gamma(3\gamma)$.

(3) In this case $\cL^i(2)\subseteq\{o_\gamma+2q: 1\leq \gamma-1\}$ 
and arguing as in (1) $\#\x^{-1}(T^i)=
\binom{\gamma-1}{i-1}$.
   \end{proof}
By summing up the computations in Lemma \ref{lemma4.4}, we obtain:
   \begin{corollary}\label{cor4.3} Notation as above$.$
   For $k=0,\ldots,\gamma-1,$
     $$
   \#\x^{-1}(T_k)=2^{\gamma-1-k}(2^k+1)\, .
     $$
     \end{corollary}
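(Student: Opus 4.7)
The plan is to apply Lemma \ref{lemma4.4} piecewise in $i$ and then collapse the resulting sums via the identity $\sum_{j=0}^{n}\binom{n}{j}=2^{n}$. Since $\cS_\gamma(3\gamma)=\bigcup_{i=0}^{\gamma}\x^{-1}(T^{i})$ is a disjoint union by (\ref{eq4.2}), we have $\#\x^{-1}(T_k)=\sum_{i=0}^{\gamma}\#\x^{-1}(T_k^{i})$.

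First I would dispose of $k=0$: Lemma \ref{lemma4.4}(1) gives $\#\x^{-1}(T_0^{i})=\binom{\gamma}{i}$ for every $i\in\{0,\ldots,\gamma\}$, hence $\#\x^{-1}(T_0)=\sum_{i=0}^{\gamma}\binom{\gamma}{i}=2^{\gamma}=2^{\gamma-1}(2^{0}+1)$, which agrees with the stated formula.

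Next I would treat the generic case $k\geq 1$. Here the condition $i+k\leq\gamma-1$ of Lemma \ref{lemma4.4}(2) holds exactly for $i\in\{0,\ldots,\gamma-1-k\}$, while Lemma \ref{lemma4.4}(3) applies for $i\in\{\gamma-k,\ldots,\gamma\}$. Summing the two contributions,
$$
\#\x^{-1}(T_k)=\sum_{i=0}^{\gamma-1-k}\Bigl[\binom{\gamma-k-1}{i}+\binom{\gamma-1}{i-1}\Bigr]+\sum_{i=\gamma-k}^{\gamma}\binom{\gamma-1}{i-1}.
$$
The terms $\binom{\gamma-1}{i-1}$ from both ranges reassemble into a single sum over $i=0,\ldots,\gamma$, in which the $i=0$ summand vanishes by convention $\binom{\gamma-1}{-1}=0$. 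Therefore
$$
\#\x^{-1}(T_k)=\sum_{i=0}^{\gamma-1-k}\binom{\gamma-k-1}{i}+\sum_{j=0}^{\gamma-1}\binom{\gamma-1}{j}=2^{\gamma-1-k}+2^{\gamma-1}=2^{\gamma-1-k}(2^{k}+1),
$$
which is the desired identity.

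There is no genuine obstacle here, only the mild bookkeeping of making sure the $i=0$ and $i=\gamma$ boundary cases are covered, and that the case split in Lemma \ref{lemma4.4} is applied on the correct range of $i$; the extreme cases of Remark \ref{rem4.2} ($\#\x^{-1}(T_k^{0})=\#\x^{-1}(T_k^{\gamma})=1$) provide a useful sanity check, since the formulas from Lemma \ref{lemma4.4}(2)--(3) reduce to $1$ at $i=0$ and $i=\gamma$, respectively.
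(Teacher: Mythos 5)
Your proof is correct and is precisely what the paper intends by its one-line justification ``by summing up the computations in Lemma \ref{lemma4.4}'': you split the sum over $i$ according to whether $i+k\leq\gamma-1$ or $i+k\geq\gamma$, reassemble the $\binom{\gamma-1}{i-1}$ terms, and apply $\sum_j\binom{n}{j}=2^n$ twice. The only blemish is notational: the disjoint decomposition you invoke is $\x^{-1}(T)=\bigcup_{i=0}^{\gamma}\x^{-1}(T^{i})$, not $\cS_\gamma(3\gamma)=\bigcup_{i=0}^{\gamma}\x^{-1}(T^{i})$, but the identity $\#\x^{-1}(T_k)=\sum_{i=0}^{\gamma}\#\x^{-1}(T_k^{i})$ that you actually use is the right one.
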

   \begin{remark}\label{rem4.31} The weight of the semigroup
   $T=T_k$ is $w_k=k$. We have $w_k\leq \gamma/2$, or $\gamma/2
   <w_k\leq \gamma-1$ and $2\gamma>\gamma+k$; thus $T$ is
   Weierstrass \cite{EH}, \cite{Komeda0}.
   Then the unique element in $\x^{-1}(T^\gamma)$ is
   also Weierstrass by \cite[Prop. 2.4]{Komeda}.
   \end{remark}
Set
    $$
M_\gamma:=\sum_{k=0}^{\gamma-1}\#\x^{-1}(T_k)=
2^{\gamma-1}(\gamma+2)-1\, .
   $$
Then, after some computations, Corollary \ref{cor4.1} can be 
improved as follows.
   \begin{corollary}\label{cor4.4} With notation as above$,$
   $$
c_1(\gamma)  \leq f_\gamma\leq c_2(\gamma)\, ,
    $$
    where 
$c_1(\gamma):=n_\gamma(\gamma+1)+M_\gamma-\gamma(\gamma+1),$ and 
$c_2(\gamma):=n_\gamma\cdot 2^\gamma-(\gamma\cdot 2^\gamma-M_\gamma).$
    \end{corollary}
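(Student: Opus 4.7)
The plan is to refine the global estimates of Corollary \ref{cor4.1} by separating out the $\gamma$ distinguished semigroups $T_0, \ldots, T_{\gamma-1}$ in $\cS_\gamma$, for which the exact size of the fiber $\x^{-1}(T_k)$ has already been determined in Corollary \ref{cor4.3}, and applying the generic bounds of Lemma \ref{lemma4.2} only to the remaining $n_\gamma - \gamma$ semigroups. By Theorem \ref{thm4.1}, $f_\gamma = \#\cS_\gamma(3\gamma) = \sum_{T\in\cS_\gamma}\#\x^{-1}(T)$, so the proof reduces to a disjoint decomposition of this sum.

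First I would verify that the $\gamma$ semigroups $T_k = \bN_0\setminus\{1,\ldots,\gamma-1,\gamma+k\}$, for $k=0,\ldots,\gamma-1$, are pairwise distinct elements of $\cS_\gamma$ (immediate from their gap sets). Then I split
$$
f_\gamma=\sum_{k=0}^{\gamma-1}\#\x^{-1}(T_k)+\sum_{T\in\cS_\gamma\setminus\{T_0,\ldots,T_{\gamma-1}\}}\#\x^{-1}(T)=M_\gamma+\sum_{T\neq T_k}\#\x^{-1}(T),
$$
using the very definition of $M_\gamma$ for the first summand.

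For the remaining summand, Lemma \ref{lemma4.2} together with Remark \ref{rem4.2} gives, for every $T\in\cS_\gamma$, the pointwise bounds
$$
\gamma+1=\sum_{i=0}^{\gamma}1\leq \sum_{i=0}^{\gamma}\#\x^{-1}(T^i)=\#\x^{-1}(T)\leq\sum_{i=0}^{\gamma}\binom{\gamma}{i}=2^{\gamma}.
$$
Applying these to each of the $n_\gamma-\gamma$ terms in the residual sum yields
$$
(n_\gamma-\gamma)(\gamma+1)\leq\sum_{T\neq T_k}\#\x^{-1}(T)\leq (n_\gamma-\gamma)\,2^{\gamma}.
$$
Adding $M_\gamma$ to both sides and rearranging gives exactly $c_1(\gamma)\leq f_\gamma\leq c_2(\gamma)$.

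There is no serious obstacle: the argument is entirely bookkeeping once the ingredients are in hand. The only thing worth double-checking is that the explicit formula $M_\gamma = 2^{\gamma-1}(\gamma+2)-1$ claimed just before the corollary really does follow from Corollary \ref{cor4.3}, i.e. that
$$
M_\gamma=\sum_{k=0}^{\gamma-1}2^{\gamma-1-k}(2^k+1)=\gamma\cdot 2^{\gamma-1}+\sum_{k=0}^{\gamma-1}2^{\gamma-1-k}=\gamma\cdot 2^{\gamma-1}+(2^{\gamma}-1),
$$
which matches $2^{\gamma-1}(\gamma+2)-1$. With this identity in place, the rewriting $M_\gamma+(n_\gamma-\gamma)(\gamma+1)=n_\gamma(\gamma+1)+M_\gamma-\gamma(\gamma+1)=c_1(\gamma)$ and $M_\gamma+(n_\gamma-\gamma)2^{\gamma}=n_\gamma\cdot 2^{\gamma}-(\gamma\cdot 2^{\gamma}-M_\gamma)=c_2(\gamma)$ completes the argument.
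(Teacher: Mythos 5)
Your proposal is correct and is precisely the computation the paper leaves implicit (the paper only says ``after some computations, Corollary \ref{cor4.1} can be improved''): split $f_\gamma=\sum_{T\in\cS_\gamma}\#\x^{-1}(T)$ into the exact contribution $M_\gamma$ from the $\gamma$ distinguished semigroups $T_0,\ldots,T_{\gamma-1}$ of Corollary \ref{cor4.3} and the generic bounds $\gamma+1\leq\#\x^{-1}(T)\leq 2^\gamma$ from Lemma \ref{lemma4.2} for the remaining $n_\gamma-\gamma$ semigroups. Your verification of the closed form $M_\gamma=2^{\gamma-1}(\gamma+2)-1$ and the final rearrangements are also correct.
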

    \begin{remark}\label{rem4.4} From Corollary \ref{cor4.4},
    \cite{Amoros2} we obtain the following computations.
    \begin{table}[h]
    \centering
    \begin{tabular}{|c|c|c|c|}
     \hline
     $\g$ & $c_1(\g)$ & $f_{\g} = N_\g(3\g)$ & $c_2(\g)$
     \\ 
     \hline                               
     0 & 1 & 1 & 1 \\
     \hline
     1 & 2 & 2 & 2 \\
     \hline
     2 & 7 & 7 & 7 \\
     \hline
     3 & 23 & 23 & 27 \\
     \hline
     4 & 62 & 68 & 95 \\
     \hline
     5 & 153 & 200 & 266 \\
     \hline
     6 & 374 & 615 & 1343 \\
     \hline
     7 & 831 & 1764 & 4671 \\
     \hline
     8 & 1810 & 5060 & 16383 \\
     \hline
     9 & 3905 & 14626 & 52993 \\
     \hline
     10 & 8277 & 41785 & 192513 \\
     \hline
     11 & 17295 & 117573 & 666625 \\
     \hline
     12 & 36211 & 332475 & 2347009 \\
     \hline
     13 & 75271 & 933891 & 8032257 \\
     \hline
     14 & 156256 & 2609832 & 27377665 \\
     \hline
     \end{tabular}
     \vspace{0.3cm}
     \caption{Bounds for $f_{\g}$}
     \end{table}
        \end{remark}
In addition, we improve Corollary \ref{cor4.2}(2) above as follows.
    \begin{corollary}\label{cor4.5}\quad
    ${\rm lim}_{\gamma\to\infty}\frac{f_\gamma}{2^\gamma}=
    \infty.$
    \end{corollary}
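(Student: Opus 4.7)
The plan is to derive this directly from the lower bound in Corollary \ref{cor4.4}, rather than from anything involving $n_\gamma$ or the golden ratio. The observation is that the lower bound $c_1(\gamma)$ already contains the term $M_\gamma = 2^{\gamma-1}(\gamma+2)-1$, which by itself grows like $\gamma \cdot 2^{\gamma-1}$, i.e.\ strictly faster than $2^\gamma$.

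Concretely, first I would use the trivial bound $n_\gamma(\gamma+1)\geq 0$ to obtain
$$
f_\gamma \;\geq\; c_1(\gamma) \;\geq\; M_\gamma-\gamma(\gamma+1)
\;=\; 2^{\gamma-1}(\gamma+2)-1-\gamma(\gamma+1).
$$
Dividing by $2^\gamma$ yields
$$
\frac{f_\gamma}{2^\gamma} \;\geq\; \frac{\gamma+2}{2}-\frac{1+\gamma(\gamma+1)}{2^\gamma}.
$$
The first term on the right tends to $+\infty$ while the second tends to $0$, so the left-hand side tends to $+\infty$. This finishes the proof.

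There is essentially no obstacle: all the real work was done in establishing Corollary \ref{cor4.4}, where the contribution of the distinguished family $\{T_k\}_{k=0}^{\gamma-1}$ was shown to give the summand $M_\gamma$ of size $\Theta(\gamma\,2^\gamma)$ in the lower bound for $f_\gamma$. The present corollary is simply the asymptotic statement that $M_\gamma/2^\gamma\to\infty$ diverges, with the error term $\gamma(\gamma+1)/2^\gamma$ being negligible. One could alternatively write $n_\gamma(\gamma+1)-\gamma(\gamma+1)=(\gamma+1)(n_\gamma-\gamma)\geq 0$ for all $\gamma$ large enough (since $n_\gamma$ grows exponentially by Zhai's theorem, or even by the much more elementary bound $n_\gamma\geq \gamma+1$ for $\gamma\geq 2$ obtained from the chain $\langle 2,2\gamma+1\rangle,\langle 3,\ldots\rangle,\ldots$), but this refinement is not needed for the limit.
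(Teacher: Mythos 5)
Your proof is correct and is essentially the argument the paper intends (it states Corollary \ref{cor4.5} without proof, immediately after Corollary \ref{cor4.4}, precisely because $f_\gamma\geq c_1(\gamma)\geq M_\gamma-\gamma(\gamma+1)$ with $M_\gamma=2^{\gamma-1}(\gamma+2)-1$, so $f_\gamma/2^\gamma\geq (\gamma+2)/2-(1+\gamma(\gamma+1))/2^\gamma\to\infty$). You also correctly identify that the $M_\gamma$ term is the essential one, since $n_\gamma(\gamma+1)/2^\gamma\to 0$ by Zhai's theorem and so the older bound of Corollary \ref{cor4.1} could not yield this conclusion.
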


     \section{Further results on the sequence $f_\gamma$}\label{s5}

From Corollaries \ref{cor4.2}, \ref{cor4.3}, and the column 
regarding $f_\gamma/f_{\gamma-1}$ in Table 4 below, it seems that 
the following property holds true:
    $$
    \text{{\bf (D:)}\qquad $f_\gamma \sim \varphi^{2\gamma}$\, ,}
    $$
    where as usual $\varphi$ is the golden ratio.
       \begin{table}[h]
    \centering
\begin{tabular}{|c|c|c|c|c|c|c|}
  \hline
  $\g$ & $f_{\g}$ & $n_{2\g}$ & $f_{\g}/f_{\g-1}$ & $f_{\g}/n_{2\g}$
   & $f_{\g+1}/\sum_{i=0}^{\g}f_i$ \\ 
  \hline                               
  0 & 1 & 1 & & 1.00 & 2.00 \\
  \hline
  1 & 2 & 2 & 2.00 & 1.00 & 2.33 \\
  \hline
  2 & 7 & 7 & 3.50 & 1.00 & 2.30 \\
  \hline
  3 & 23 & 23 & 3.29 & 1.00 & 2.06\\
  \hline
  4 & 68 & 67 & 2.96 & 1.01 & 1.98 \\
  \hline
  5 & 200 & 204 & 2.94 & 0.98 & 2.04 \\
  \hline
  6 & 615 & 592 & 3.08 & 1.04 & 1.93\\
  \hline
  7 & 1764 & 1693 & 2.87 & 1.04 & 1.89 \\
  \hline
  8 & 5060 & 4806 & 2.87 & 1.05 & 1.89 \\
  \hline
  9 & 14626 & 13467 & 2.89 & 1.09 & 1.87 \\
  \hline
  10 & 41785 & 37396 & 2.86 & 1.12 & 1.83 \\
  \hline
  11 & 117573 & 103246 & 2.81 & 1.14 & 1.83 \\
  \hline
  12 & 332475 & 282828 & 2.83 & 1.18 & 1.82 \\
  \hline
  13 & 933891 & 770832 & 2.81 & 1.21 & 1.80 \\
  \hline
  14 & 2609832 & 2091030 & 2.79 & 1.25 & \\
  \hline
  \end{tabular}
  \vspace{0.3cm}
  \caption{{}}
  \label{table3}
  \end{table}
We end up by computing some interesting limits
    involving the sequence $f_\gamma$ and which are very
    much related to statement {\bf (D)} above. Recall that
    ${\rm lim}_{g\to\infty}\frac{n_{g+1}}{n_g}=\varphi$ \cite{Zhai}.
       \begin{proposition}\label{prop5.1}
   \begin{enumerate}
   \item[\rm(1)] {\bf (D)} is equivalent to $f_\gamma
   \sim n_{2\gamma};$

   \item[\rm(2)] {\bf (D)} implies ${\rm
   lim}_{\gamma\to\infty}\frac{f_{\gamma+1}}{f_\gamma}=\varphi^2;$

   \item[\rm(3)] If ${\rm lim}_{\gamma\to\infty}
   \frac{f_\gamma+1}{f_\gamma}=\varphi^2,$ then
   ${\rm lim}_{\gamma\to\infty}
   \frac{f_{\gamma+1}}{\sum_{i=0}^{\gamma}f_i}=\varphi.$

     \end{enumerate}
     \end{proposition}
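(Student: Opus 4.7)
The plan is to dispatch the three parts using, respectively, Zhai's theorem, a direct substitution, and a Stolz--Cesàro argument together with the defining identity $\varphi^2=\varphi+1$.

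For (1), I would invoke Zhai's theorem \cite{Zhai}, which as used in the proof of Corollary \ref{cor4.2} gives that $L:=\lim_{\gamma\to\infty} n_\gamma/\varphi^\gamma$ is a positive real number. In particular $n_{2\gamma}/\varphi^{2\gamma}\to L$, so the identity
$$
\frac{f_\gamma}{n_{2\gamma}} \;=\; \frac{f_\gamma}{\varphi^{2\gamma}} \cdot \frac{\varphi^{2\gamma}}{n_{2\gamma}}
$$
shows that $f_\gamma/n_{2\gamma}$ converges to a positive finite limit if and only if $f_\gamma/\varphi^{2\gamma}$ does, the two limits differing by the factor $L$. Thus \textbf{(D)} and $f_\gamma\sim n_{2\gamma}$ describe the same asymptotic, reconciled through $L$.

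For (2), assuming \textbf{(D)} I would write $f_\gamma=\varphi^{2\gamma}(1+\epsilon_\gamma)$ with $\epsilon_\gamma\to 0$ and observe
$$
\frac{f_{\gamma+1}}{f_\gamma} \;=\; \varphi^{2}\cdot\frac{1+\epsilon_{\gamma+1}}{1+\epsilon_\gamma} \;\longrightarrow\; \varphi^{2}.
$$

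For (3), set $S_\gamma:=\sum_{i=0}^{\gamma} f_i$. The hypothesis $f_{\gamma+1}/f_\gamma\to\varphi^2>1$ forces $(f_{\gamma+1})$ to be eventually strictly increasing and unbounded, so Stolz--Cesàro applies with $a_\gamma=S_\gamma$, $b_\gamma=f_{\gamma+1}$, giving
$$
\lim_{\gamma\to\infty}\frac{S_\gamma}{f_{\gamma+1}} \;=\; \lim_{\gamma\to\infty}\frac{S_\gamma-S_{\gamma-1}}{f_{\gamma+1}-f_\gamma} \;=\; \lim_{\gamma\to\infty}\frac{1}{f_{\gamma+1}/f_\gamma-1} \;=\; \frac{1}{\varphi^{2}-1} \;=\; \frac{1}{\varphi},
$$
the last step using $\varphi^2=\varphi+1$. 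Inverting yields $f_{\gamma+1}/S_\gamma\to\varphi$.

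The only delicate point is part (1): both sides assert asymptotic proportionality rather than strict equivalence, and it is Zhai's theorem that supplies the positive constant $L$ relating them. Parts (2) and (3) are short analytic computations, with the one nontrivial ingredient being the golden-ratio identity $\varphi^2-1=\varphi$, which is precisely what produces the exponent drop from $\varphi^2$ to $\varphi$ when passing from the ratio $f_{\gamma+1}/f_\gamma$ to the ratio $f_{\gamma+1}/S_\gamma$.
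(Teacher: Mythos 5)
Your argument is correct, and parts (1) and (2) essentially coincide with the paper's: for (1) the paper likewise just cites Zhai's theorem in the form $n_{2\gamma}\sim\varphi^{2\gamma}$ (with $\sim$ meaning, as you rightly note, ratio tending to a positive constant), and for (2) the paper reaches the same conclusion by the slightly more roundabout telescoping $\frac{f_{\gamma+1}}{f_\gamma}=\frac{f_{\gamma+1}}{n_{2\gamma+2}}\cdot\frac{n_{2\gamma+2}}{n_{2\gamma+1}}\cdot\frac{n_{2\gamma+1}}{n_{2\gamma}}\cdot\frac{n_{2\gamma}}{f_\gamma}$ together with $n_{g+1}/n_g\to\varphi$, whereas your direct substitution $f_\gamma=C\varphi^{2\gamma}(1+\epsilon_\gamma)$ needs only the hypothesis \textbf{(D)} itself. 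The real divergence is in part (3): the paper proves it by hand, fixing $0<\epsilon<1/3$, choosing $\gamma_0(\epsilon)$ beyond which $\frac{1}{\varphi^2}-\epsilon<f_j/f_{j+1}<\frac{1}{\varphi^2}+\epsilon$, splitting $\sum_{i=0}^{\gamma}f_i/f_{\gamma+1}$ into a vanishing head plus a tail dominated by the geometric series in $t=\frac{1}{\varphi^2}+\epsilon<1$, and then letting $\epsilon\to0$ to squeeze the limit to $\frac{1}{\varphi^2-1}=\frac{1}{\varphi}$. Your Stolz--Ces\`aro argument replaces all of this with one application of a standard theorem; the hypotheses you need (that $f_{\gamma+1}$ is eventually strictly increasing and unbounded) do follow from $f_{\gamma+1}/f_\gamma\to\varphi^2>1$ together with $f_\gamma\geq 1$, so the application is legitimate, and the computation $\lim\frac{S_\gamma-S_{\gamma-1}}{f_{\gamma+1}-f_\gamma}=\frac{1}{\varphi^2-1}=\frac{1}{\varphi}$ is exactly the point where the golden-ratio identity enters, just as in the paper. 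What the paper's approach buys is self-containedness (it is really a proof of the relevant case of Stolz--Ces\`aro unwound into explicit geometric bounds); what yours buys is brevity and a cleaner identification of where each hypothesis is used.
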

  \begin{proof} (1) By \cite[Thm. 1]{Zhai} $\varphi^{2\gamma}
  \sim n_{2\gamma}$, so the result follows.

    (2) Write $\frac{f_{\g+1}}{f_{\g}} =
    \frac{f_{\g+1}}{n_{2\g+2}} \cdot \frac{n_{2\g+2}}{n_{2\g+1}}
     \cdot \frac{n_{2\g+1}}{n_{2\g}} \cdot
     \frac{n_{2\gamma}}{f_{\g}}$. By (1),
     $\lim_{\g \to \infty} \frac{f_{\g}}{n_{2\g}} = K>0$. Then
    $$
    \lim_{\g \to \infty}
    \frac{f_{\g+1}}{f_{\g}}= K \cdot \f \cdot \f \cdot
    \frac{1}{K} = \f^2\, .
    $$
    (3) Let $0<\e<1/3$.\newline
{\bf Claim.}
   $$
   M:=\frac{1-\e\f^2}{\f^2-(1-\e\f^2)} \leq
   \lim_{\g \to \infty}\frac{f_0 + \ldots + f_{\g}}{f_{\g+1}}
   \leq F:=\frac{1+\e\f^2}{\f^2-(1+\e\f^2)}\, .
   $$
Then (3) follows after letting $\epsilon\to 0$ and from the 
well-known fact that $\varphi^2=\varphi+1$.

{\em Proof of the Claim.} By hypothesis, $\lim_{\g \to 
\infty}f_{\g}/f_{\g+1}=1/\f^2$. Set
    $$
    \g_0(\e) := \min\{i \in
    \N: \frac{1}{\f^2}-\e
    <\frac{f_j}{f_{j+1}}
    < \frac{1}{\f^2}+\e, \, \forall j \geq i\}\, .
    $$
For $\g > \gamma_0=\g_0(\e) = \g_0$ write
    $$
\frac{f_0+\ldots+f_{\g_0-1} + f_{\g_0}+\ldots + f_{\g}}{f_{\g+1}}=
\frac{f_0+\ldots+f_{\g_0-1}}{f_{\g+1}}+\sum_{j=\gamma_0}^\gamma 
A_j\, ,
    $$
    where $A_j=f_j/f_{\gamma+1}$. In particular,
    $$
A_j=\frac{f_j}{f_{j+1}}\cdot\ldots\cdot
\frac{f_\gamma}{f_{\gamma}}<t^{\gamma-j+1}\, ,
    $$
being $t=\frac{1}{\varphi^2}+\epsilon<1$ so that
   $$
   {\rm lim}_{\gamma\to \infty}\sum_{j=\gamma_0}^\gamma A_j=F
   $$
   and we obtain the upper bound. We can prove the
   lower bound in a similar way.
   \end{proof}

     {\bf Acknowledgment.} The authors were partially supported respectively by 
CAPES/CNPq-Brazil (grant 140292/2015-2), and CNPq-Brazil (grant 308326/2014-8). They would 
like to thank Pedro A. Garc\'{\i }a-S\'anchez for the computations involving the sequence 
$N_\gamma(g)$, and Maria Bras-Amor\'os and Klara Stokes for their interest in this 
work. Part of this paper was presented in the
     \lq\lq International Meeting
      on Numerical Semigroups With Applications" (2016) at 
      Levico-Terme, Italy. We are deeply grateful to the referees for their comments, 
      suggestions and corrections that allowed to improve the early version of the paper.

\end{document}